\documentclass[11pt]{article}
\usepackage{amsmath,amsthm,amssymb,amsfonts}
\usepackage{latexsym}
\usepackage{graphicx,psfrag,import}
\usepackage{fullpage}
\usepackage{framed}
\usepackage{verbatim}
\usepackage{color}
\usepackage{epsfig}
\usepackage{epstopdf}
\usepackage{hyperref}
\usepackage{geometry}
\usepackage{mathtools}
\usepackage{arydshln}
\usepackage{enumerate}
\usepackage{multicol}
\usepackage{a4wide}
\usepackage{booktabs}
\usepackage{enumitem}
\usepackage{lineno}
\usepackage{parcolumns}
\usepackage{thmtools}
\usepackage{xr}
\usepackage{epstopdf}
\usepackage{mathrsfs}
\usepackage{subfig}
\usepackage{caption}
\usepackage{comment}
\usepackage{authblk}
\usepackage{setspace}
\usepackage[normalem]{ulem}
\usepackage{footmisc}
\usepackage{float}

\geometry{margin=1in}

\parskip=0pt plus 1pt

\theoremstyle{plain}
\newtheorem{theorem}{Theorem}[section]

\newtheorem{proposition}[theorem]{Proposition}
\newtheorem{lemma}[theorem]{Lemma}

\theoremstyle{definition}
\newtheorem{definition}[theorem]{Definition}
\newtheorem{example}[theorem]{Example}

\newtheorem{remark}[theorem]{Remark}

\theoremstyle{remark}

\parskip=0pt plus 1pt

\newcommand\RR{\mathbb{R}}

\newcommand\by{\boldsymbol{y}}

\newcommand\byi{\boldsymbol{y_i}}
\newcommand\bypj{\boldsymbol{y'_j}}
\newcommand\byj{\boldsymbol{y_j}}

\renewcommand\bf{\boldsymbol{f}}

\newcommand\bk{\boldsymbol{k}}

\newcommand\bx{\boldsymbol{x}}
\newcommand\bv{\boldsymbol{v}}

\newcommand\bY{\boldsymbol{Y}}
\newcommand\bA{\boldsymbol{A}}
\newcommand\bB{\boldsymbol{B}}

\newcommand\bI{\boldsymbol{I}}
\newcommand\bq{\boldsymbol{q}}
\newcommand\bp{\boldsymbol{p}}
\newcommand\br{\boldsymbol{r}}

\newcommand\bJ{\boldsymbol{J}}

\newcommand\hbJ{\hat{\boldsymbol{J}}}

\newcommand{\mK}{\mathcal{K}}
\newcommand{\dK}{\mathcal{K}_{\RR\text{-disg}}}
\newcommand{\mJ}{\mathcal{J}_{\RR}}
\newcommand{\eJ}{\mathcal{J}_{\textbf{0}}}
\newcommand{\mD}{\mathcal{D}_{\textbf{0}}}

\newcommand{\mS}{\mathcal{S}}
\newcommand{\mSG}{\mathcal{S}_G}

\newcommand{\hbk}{\hat{\bk}}

\newcommand{\hbq}{\hat{\bq}}

\newcommand\bd{\boldsymbol{d}}
\newcommand{\defi}{\textbf}
\DeclareMathOperator{\spn}{span}

\begin{document}

\title{A Lower Bound on the Dimension of the $\RR$-Disguised Toric Locus of a Reaction Network}

\author[1]{
         Gheorghe Craciun%
}
\author[2]{
        Abhishek Deshpande%
}
\author[3]{
        Jiaxin Jin%
}
\affil[1]{\small Department of Mathematics and Department of Biomolecular Chemistry, 
\protect \\
University of Wisconsin-Madison}
\affil[2]{Center for Computational Natural Sciences and Bioinformatics, \protect \\
 International Institute of Information Technology Hyderabad}
\affil[3]{\small Department of Mathematics, 
\protect \\
The Ohio State University}

\date{} 

\maketitle

\begin{abstract}
\noindent
Polynomial dynamical systems (i.e. dynamical systems with polynomial right hand side) are ubiquitous in applications, especially as models of reaction networks and interaction networks. The properties of general polynomial dynamical systems can be very difficult to analyze, due to nonlinearity, bifurcations, and the possibility for chaotic dynamics. On the other hand, {\em toric dynamical systems} are polynomial dynamical systems that appear naturally as models of reaction networks, and have very robust and stable properties. A {\em disguised toric dynamical system} is a polynomial dynamical system generated by a reaction network $\mathcal N$ and some choice of positive parameters, such that (even though it may not be toric with respect to $\mathcal N$) it has a toric realization with respect to some  network $\mathcal N'$. Disguised toric dynamical systems enjoy all the robust stability properties of toric dynamical systems. In this paper, we study a larger set of dynamical systems where the rate constants are allowed to take both positive and negative values. More precisely, we analyze the \emph{$\RR$-disguised toric locus} of a reaction network $\mathcal N$, i.e., the subset in the space rate constants (positive or negative) of $\mathcal N$ for which the corresponding polynomial dynamical system is disguised toric. We focus especially on finding a lower bound on the dimension of the $\RR$-disguised toric locus. 
\end{abstract}

\tableofcontents

\section{Introduction}

Polynomial dynamical systems (i.e. dynamical systems with polynomial right hand side) are ubiquitous in models of (bio)chemical reaction networks, the spread of infectious diseases, population dynamics in ecosystems, and in many other settings. Our focus in this paper is on {\em complex balanced dynamical systems} (also known as {\em toric dynamical systems}~\cite{CraciunDickensteinShiuSturmfels2009} owing to their connection with toric varieties ~\cite{dickenstein2020algebraic}). Complex balanced dynamical systems are known to exhibit remarkably robust dynamics~\cite{horn1972general}. In particular, it is known that for complex balanced dynamical systems, there exists a unique positive steady state within each affine invariant subspace. Further, there exists a strictly convex Lyapunov function, 
which implies that all positive steady states are locally asymptotically stable~\cite{horn1972general, yu2018mathematical}. They are also related to the \emph{Global Attractor Conjecture}~\cite{CraciunDickensteinShiuSturmfels2009} which states that complex balanced dynamical systems have a globally attracting steady state within each stoichiometric compatibility class. Several special cases of this conjecture have been proved~\cite{gopalkrishnan2014geometric, pantea2012persistence, craciun2013persistence, boros2020permanence}, and a proof in full generality has been proposed in~\cite{craciun2015toric}.

A concept that is often useful in the study of reaction networks is \emph{dynamical equivalence}~\cite{craciun2008identifiability, horn1972general}, which reflects the fact that it is possible for different reaction networks to generate the same set of  differential equations, for appropriate choices of parameters (i.e., reaction rate constants). Due to the rich properties of toric dynamical systems, it is important  to study mass-action systems that are {\em dynamically equivalent to toric dynamical systems}. Such systems are called \emph{disguised toric dynamical systems}~\cite{2022disguised}. Another object of interest is the locus within the set of parameters (reaction rate constants) that generate toric dynamical systems. This is called the \emph{toric locus}, and has been studied for a long time. In particular, it is known that the toric locus (up to a change of coordinates) is a \emph{toric variety}, and its codimension is the {\em deficiency} of the network~\cite{CraciunDickensteinShiuSturmfels2009, craciun2020structure, michalek2021invitation}
Combining these concepts, one can ask: what is the locus in the space of (positive) reaction rate constants that generate {\em disguised toric dynamical systems}? This set is called the \emph{disguised toric locus}. In~\cite{2022disguised}, the authors provide several examples that illustrate the disguised toric locus and propose an algorithm to calculate this locus. 
Recently, the disguised toric locus has been shown to be invariant under invertible affine transformations of the reaction network~\cite{haque2022disguised}. We study a related concept called the \emph{$\RR$-disguised toric locus}, where the reaction rate constants are allowed to take both positive and negative values.

In this paper we introduce a method for constructing continuous injective transformations that take values on the $\RR$-disguised toric locus, and allow us to establish a lower bound on the dimension of the  $\RR$-disguised toric locus. We illustrate the calculation of this lower bound on some simple examples; interestingly, for some networks the lower bound is actually equal to the dimension of the space of parameters of the network,  {\em which implies that the  $\RR$-disguised toric locus has positive measure}. This is important, because (as we will see in examples) for the same networks  the {\em toric locus} has measure zero, so the classical results on complex balanced systems can only be applied for relatively few choices of parameter values; this shows that the extension from {\em toric} systems to {\em  $\RR$-disguised toric} systems can be very useful.   


This paper is organized as follows: 
In Sections~\ref{sec:reaction_network} and \ref{sec:complex_balanced} we introduce reaction networks, complex-balanced dynamical systems, and flux systems. In Section~\ref{sec:dynamical_equivalence} we define the notions of \emph{dynamical equivalence} and \emph{flux equivalence}. In Section~\ref{sec:disguised_locus} we formally define the  $\RR$-disguised toric locus of a reaction network. In Section~\ref{sec:main_result} we give a lower bound on the dimension of the  $\RR$-disguised toric locus and we look at several examples. In Section~\ref{sec:discussion} we discuss the main results of the paper and list possible directions for future work.

\medskip

\textbf{Notation.}
We will denote by $\mathbb{R}_{\geq 0}^n$ the set of vectors in $\mathbb{R}^n$ with non-negative entries. Similarly,  $\mathbb{R}_{>0}^n$ will denote the set of vectors in $\mathbb{R}^n$ with positive entries.  Given vectors $\bx = (\bx_1, \ldots, \bx_n)^{\intercal}\in \RR^n_{>0}$ and $\by = (\by_1, \ldots, \by_n)^{\intercal} \in \RR^n$, we define:
\begin{equation} \notag
\bx^{\by} = \bx_1^{y_{1}} \ldots \bx_n^{y_{n}}.
\end{equation}
Further, for any two vectors $\bx, \by \in \RR^n$, we denote $\langle \bx, \by \rangle = \sum\limits^{n}_{i=1} x_i y_i$.

\section{Background}
\label{sec:background}

In this section, we recall some terminology and results in the reaction network theory.

\subsection{Euclidean Embedded Graphs and Mass-action Systems}
\label{sec:reaction_network}

In this subsection, we introduce the \emph{Euclidean embedded graph}, which is a directed graph in $\RR^n$,
and show how to define the \emph{mass-action system} from it.

\begin{definition}[\cite{craciun2015toric,craciun2020endotactic,craciun2019polynomial}]
\begin{enumerate}
\item[(a)] A \defi{reaction network} $G = (V, E)$, also called a \defi{Euclidean embedded graph (or E-graph)},
is a directed graph in $\RR^n$, where $V \subset \mathbb{R}^n$ represents a finite set of \defi{vertices} without isolated vertices, and $E \subseteq V \times V$ represents a finite set of \defi{edges} with no self-loops and at most one edge between a pair of ordered vertices.

\item[(b)] Let $V = \{ \by_1, \ldots, \by_m \}$. 
A directed edge $(\by_i, \by_j) \in E$, also called a \defi{reaction} in the network, is also denoted by $\by_i \to \by_j$, where $\by_i$ and $\by_j$ are called the \defi{source vertex} and \defi{target vertex} respectively.
Moreover, we define the \defi{reaction vector} associated with the reaction $\by_i \rightarrow \by_j$ to be $\by_j - \by_i \in\mathbb{R}^n$. 
\end{enumerate}
\end{definition}

\begin{definition} 
Let $G=(V, E)$ be an E-graph.
\begin{enumerate}[label=(\alph*)]
\item The set of vertices $V$ is partitioned by its connected components, also called \defi{linkage classes}.

\item A connected component $L \subseteq V$ is said to be \defi{strongly connected} if every edge is part of a directed cycle. 
Further, $G=(V, E)$ is \defi{weakly reversible} if all connected components are strongly connected.
\end{enumerate}
\end{definition}

\begin{example}
Figure~\ref{fig:e-graph} shows two reaction networks represented as E-graphs.

\begin{figure}[h!]
\centering
\includegraphics[scale=0.4]{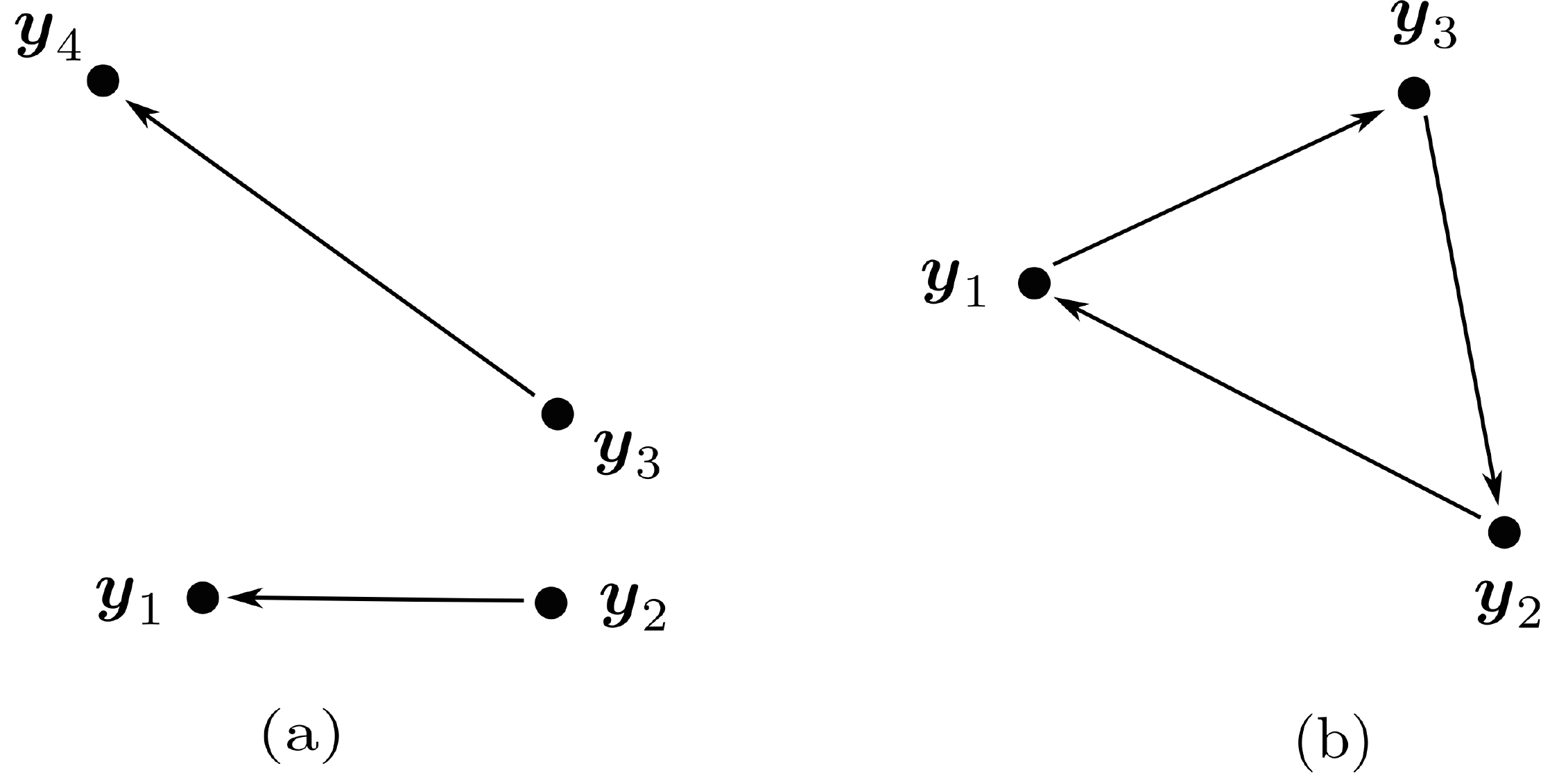}
\caption{\small (a) This reaction network consists of two linkage classes. (b) This reaction network is weakly reversible and contains one linkage class.}
\label{fig:e-graph}
\end{figure} 
\qed
\end{example}

\begin{definition}
\begin{enumerate}
\item[(a)] An E-graph $G=(V, E)$ is called a \defi{(directed) complete graph}, if for every pair of vertices $\by_i, \by_j \in V$, $\by_i \rightarrow \by_j \in E$. 

\item[(b)] 
Let $G_1 = (V_1, E_1)$ and $G_2 = (V_2, E_2)$ be two E-graphs. 
$G_1$ is called a \defi{subgraph of $G_2$} (denoted by $G_1 \subseteq G_2$), if $V_1 \subseteq V_2$ and $E_1 \subseteq E_2$. 
Further, we let $G_1 \sqsubseteq G_2$ denote that $G_1$ is a weakly reversible subgraph of $G_2$.
\end{enumerate}
\end{definition}

For any E-graph $G=(V, E)$, we can obtain a complete graph by connecting every pair of vertices in $V$, denoted by $G_c$, which is called the \defi{(directed) complete graph on $G$}.
We have $G \subseteq G_c$, and further if $G$ is weakly reversible, then $G \sqsubseteq G_c$.

\begin{definition}[\cite{yu2018mathematical,feinberg1979lectures}]
Let $G=(V, E)$ be an E-graph. 
Denote a \defi{reaction rate vector} by 
$$
\bk = (k_{\by_i \rightarrow \by_j})_{\by_i \rightarrow \by_j \in E} \in \mathbb{R}_{>0}^{E},
$$
where the positive number $k_{\by_i \rightarrow \by_j}$ or $k_{ij}$ is called the \defi{reaction rate constant} of the reaction $\by_i \rightarrow \by_j \in E$. The \defi{associated mass-action dynamical system} generated by $(G, \bk)$ is the dynamical system on  $\RR_{>0}^n$ given by
\begin{equation} \label{eq:mass_action}
 \frac{\mathrm{d} \bx}{\mathrm{d} t} 
= \sum_{\by_i \rightarrow \by_j \in E}k_{\by_i \rightarrow \by_j} \bx^{\by_i}(\by_j - \by_i).
\end{equation}
Moreover, we define the \defi{stoichiometric subspace} of $G$ as the span of the reaction vectors  of $G$, that is, 
\begin{equation}
\mSG = \spn \{ \by_j - \by_i: \by_i \rightarrow \by_j \in E \}.
\end{equation}
\end{definition}

Note that we set the domain of \eqref{eq:mass_action} to be $\mathbb{R}_{>0}^n$.  The system of ODEs does not allow
$\mathbb{R}_{>0}^n$ to be forward-invariant in general. But if we assume $V \subset \mathbb{Z}_{\geq 0}^n$, then the positive orthant $\mathbb{R}_{>0}^n$ is forward-invariant \cite{sontag2001structure}. 
Therefore, any solution to \eqref{eq:mass_action} with initial condition $\bx_0 \in \mathbb{R}_{>0}^n$ and $V \subset \mathbb{Z}_{\geq 0}^n$, is confined to the set $(\bx_0 + \mSG)\cap \mathbb{R}_{>0}^n$, which is  the \defi{affine invariant polyhedron} of $G$ at $\bx_0$.

\begin{definition} 
\label{def:mas_realizable}
Let $G=(V, E)$ be an E-graph.
Consider a dynamical system given by
\begin{equation} \label{eq:realization_ode}
\frac{\mathrm{d} \bx}{\mathrm{d} t} 
= \bf (\bx).
\end{equation}
This dynamical system is said to be \defi{$\RR$-realizable} (or has a \defi{$\RR$-realization}) on $G$, if there exists some $\bk \in \mathbb{R}^{E}$ such that
\begin{equation} \label{eq:realization}
\bf (\bx) =
\sum_{\by_i \rightarrow \by_j \in E}k_{\by_i \rightarrow \by_j} \bx^{\by_i}(\by_j - \by_i).
\end{equation}
Further, if $\bk \in \mathbb{R}^{E}_{>0}$, this dynamical system is said to be \defi{realizable} (or has a \defi{realization}) on $G$.
\end{definition}

\subsection{Complex-Balanced Systems and Flux Systems}
\label{sec:complex_balanced}

Here we focus on the \emph{complex-balanced systems} and \emph{complex-balanced flux systems}, which enjoy various graphic and dynamical properties. In addition, we build the \emph{toric locus} which serves to realize the complex-balanced systems on an E-graph.

\begin{definition} 
\label{def:cb_system}
Consider the mass-action system generated by $(G, \bk)$ in \eqref{eq:mass_action}.
A state $\bx^* \in \mathbb{R}_{>0}^n$ is called a \defi{positive steady state} if 
\begin{equation} 
\frac{\mathrm{d} \bx}{\mathrm{d} t} 
= \sum_{\by_i \rightarrow \by_j \in E}k_{\by_i \rightarrow \by_j} (\bx^*)^{\by_i}(\by_j - \by_i)
= \mathbf{0}.
\end{equation}
Further, a positive steady state $\bx^* \in \RR_{>0}^n$ is called a \defi{complex-balanced steady state}, if for every vertex $\by_0 \in V$,
\begin{equation} 
\sum_{\by_0 \to \by' \in E} k_{\by_0 \to \by'} (\bx^*)^{\by_0}
= \sum_{\by \to \by_0 \in E} k_{\by \to \by_0}
(\bx^*)^{\by}.
\end{equation}
We say the pair $(G, \bk)$ satisfies the \defi{complex-balanced conditions}, and the mass-action system generated by $(G, \bk)$ is called \defi{a complex-balanced system} or \defi{toric dynamical system}. 
\end{definition}

The following theorem illustrates some of the most essential properties of complex-balanced systems.   

\begin{theorem}[\cite{horn1972general}]
\label{thm:cb}
Let $(G, \bk)$ be a complex-balanced system, then 
\begin{enumerate}
\item[(a)] The E-graph $G=(V,E)$ is weakly reversible.

\item[(b)] All positive steady states are complex-balanced, and there is exactly one steady state within each invariant polyhedron. 

\item[(c)] Every complex-balanced steady state is asymptotically stable with respect to its invariant polyhedron. 
\end{enumerate}
\end{theorem}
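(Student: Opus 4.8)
The statement is the classical Horn--Jackson theorem, and the plan is to prove all three parts at once from a single object: the pseudo-Helmholtz (free-energy) Lyapunov function attached to a complex-balanced steady state. Fix a complex-balanced steady state $\bx^{*}\in\RR^{n}_{>0}$ of $(G,\bk)$, which exists by hypothesis, and put $\kappa_{ij}:=k_{\byi\to\byj}(\bx^{*})^{\byi}>0$ for each reaction $\byi\to\byj\in E$, so that the complex-balanced conditions become the purely combinatorial identities $\sum_{\byj}\kappa_{ij}=\sum_{\byj}\kappa_{ji}$ at every vertex. Define
\[
  L(\bx)\ :=\ \sum_{l=1}^{n}\Bigl(x_{l}\ln\tfrac{x_{l}}{x^{*}_{l}}-x_{l}+x^{*}_{l}\Bigr),\qquad \bx\in\RR^{n}_{>0},
\]
which is strictly convex since its Hessian is $\operatorname{diag}(1/x_{1},\dots,1/x_{n})\succ 0$, and note $\nabla L(\bx)=\ln(\bx/\bx^{*})$ (componentwise).

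For part (a) I would argue from the structure of the flux graph. Form the condensation of $G$ into its strongly connected components; it is a finite directed acyclic graph, hence has a sink, i.e. a strongly connected vertex set $S$ with no edge leaving $S$. Summing the balance identities $\sum_{\byj}\kappa_{ij}=\sum_{\byj}\kappa_{ji}$ over all $\by_0\in S$, every flux with both endpoints in $S$ appears on both sides, leaving $\sum_{\byi\notin S,\ \byj\in S}\kappa_{ij}=0$; since every $\kappa_{ij}$ is strictly positive, no edge enters $S$ either. Thus $S$ is a strongly connected full linkage class, and deleting it yields a complex-balanced system on a smaller E-graph; induction on the number of vertices shows every linkage class is strongly connected, i.e. $G$ is weakly reversible.

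For part (b) I would differentiate $L$ along the mass-action ODE \eqref{eq:mass_action}. Writing $\bu:=\ln(\bx/\bx^{*})$ and $a_{i}:=\langle\bu,\byi\rangle$, one gets $\dot L=\langle\bu,\dot\bx\rangle=\sum_{\byi\to\byj\in E}\kappa_{ij}\,e^{a_{i}}(a_{j}-a_{i})$. The convexity inequality $e^{a_{i}}(a_{j}-a_{i})\le e^{a_{j}}-e^{a_{i}}$ (strict unless $a_{i}=a_{j}$) together with the combinatorial identities above collapses $\sum\kappa_{ij}(e^{a_{j}}-e^{a_{i}})$ to $0$, so $\dot L\le 0$, with equality exactly when $a_{i}=a_{j}$ for every edge, i.e. when $\bu\perp\mSG$. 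Grouping the reaction terms by linkage class, where $a_{i}$ is then constant, and using the balance of $\bx^{*}$, one checks that $\bu\perp\mSG$ forces $\dot\bx=0$; conversely $\dot\bx=0$ gives $\dot L=\langle\bu,\dot\bx\rangle=0$ and hence $\bu\perp\mSG$, and the same linkage-class computation then transfers complex balance from $\bx^{*}$ to $\bx$. Therefore \emph{every} positive steady state is complex-balanced, and the set of them is the manifold $\{\bx^{*}\circ e^{\bv}:\bv\in\mSG^{\perp}\}$. Finally, to see that each invariant polyhedron $(\bx_{0}+\mSG)\cap\RR^{n}_{>0}$ contains exactly one such point, minimize the strictly convex $L$ over this relatively open convex set: a point is an interior critical point iff $\nabla L=\ln(\bx/\bx^{*})\perp\mSG$, i.e. iff it is a complex-balanced steady state, and strict convexity makes such a point unique; existence of the minimizer reduces to the coercivity of $L$ along the recession directions of the polyhedron, which is exactly Birch's theorem.

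Part (c) then follows formally: relative to a fixed invariant polyhedron, $L$ is strictly convex with a unique critical point $\bar\bx$ (the complex-balanced steady state of that class), so $\bar\bx$ is a strict relative minimum of $L$; combined with $\dot L\le 0$ everywhere and $\dot L=0$ only at $\bar\bx$, Lyapunov's stability theorem (with LaSalle's invariance principle for the last point) gives that $\bar\bx$ is asymptotically stable relative to its invariant polyhedron. The step I expect to be the main obstacle is the existence half of part (b): proving that the strictly convex $L$ actually attains its minimum on a possibly \emph{unbounded} invariant polyhedron, equivalently the surjectivity in Birch's theorem, since the soft convexity arguments only yield uniqueness and the critical-point characterization, not existence.
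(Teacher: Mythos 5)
The paper does not prove this statement at all: it is quoted as the classical Horn--Jackson theorem and attributed to \cite{horn1972general}. Your argument is precisely the standard proof from that literature --- the pseudo-Helmholtz Lyapunov function $L$, the convexity inequality $e^{a_i}(a_j-a_i)\le e^{a_j}-e^{a_i}$ telescoping against the complex-balance identities, the sink-component argument for weak reversibility, and convex minimization of $L$ on each invariant polyhedron --- and it is essentially correct. The only real incompleteness is the one you flag yourself: the existence half of (b), i.e.\ showing the strictly convex $L$ attains its minimum in the \emph{relative interior} of a possibly unbounded polyhedron (compactness of sublevel sets handles the recession directions, but ruling out a minimizer on the boundary $\{x_l=0\}$ is the genuinely delicate step of Birch's theorem); since the theorem is a cited classical result, invoking Birch's theorem there is acceptable, but a self-contained proof would need that boundary argument spelled out.
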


\begin{definition} 
Let $G=(V, E)$ be an E-graph.
\begin{enumerate}

\item[(a)] Define the \defi{toric locus} on $G$ as
\begin{equation}
\mK (G) := \{ \bk \in \mathbb{R}_{>0}^{E} \ \big| \ \text{the mass-action system generated by } (G, \bk) \ \text{is toric} \}.
\end{equation}

\item[(b)] A dynamical system of the form 
\begin{equation} 
 \frac{\mathrm{d} \bx}{\mathrm{d} t} 
= \bf (\bx),
\end{equation}
is called \defi{disguised toric} on $G$, if it is realizable on $G$
for some $\bk \in \mK (G) \subseteq \mathbb{R}_{>0}^{E}$, i.e., it has a \defi{complex-balanced realization} on $G=(V, E)$.
\end{enumerate}
\end{definition}

In~\cite{CraciunDickensteinShiuSturmfels2009}, it is shown that the toric locus is a variety given by a binomial ideal, intersected with the positive orthant. The following theorem makes this precise.

\begin{theorem}[\cite{CraciunDickensteinShiuSturmfels2009}]
\label{thm:homeo}
Consider a weakly reversible E-graph $G = (V, E)$. Then $\mK(G)$ is a toric variety (up to a polynomial change of coordinates).
\end{theorem}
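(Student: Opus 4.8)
The plan is to run the classical Matrix--Tree argument, whose upshot is that, after taking logarithms, complex balancing becomes a \emph{linear} condition on a vector of tree-constant polynomials. First I would rewrite the conditions of Definition~\ref{def:cb_system} in matrix form. Fix an ordering $\by_1,\dots,\by_m$ of $V$ and, for $\bk\in\RR_{>0}^{E}$, let $\bA_\bk$ be the weighted Laplacian of $G$: $(\bA_\bk)_{ij}=k_{\by_j\to\by_i}$ for $i\neq j$ (with $k=0$ on non-edges) and $(\bA_\bk)_{ii}=-\sum_{\ell}k_{\by_i\to\by_\ell}$. Writing $\Psi(\bx):=\big((\bx)^{\by_1},\dots,(\bx)^{\by_m}\big)^{\intercal}$, the complex-balanced equations are exactly $\bA_\bk\,\Psi(\bx^*)=\mathbf{0}$, so $\bk\in\mK(G)$ if and only if $\ker\bA_\bk$ meets the image of the monomial map $\Psi:\RR_{>0}^n\to\RR_{>0}^m$.

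Since $G$ is weakly reversible, each linkage class $L_1,\dots,L_\ell$ is strongly connected, so the Markov-chain Matrix--Tree theorem gives $\dim\ker\bA_\bk=\ell$ with a distinguished basis supported class by class: for $\by_i\in L_\alpha$ the corresponding kernel vector has $i$-th entry $K_i(\bk)=\sum_{T}\prod_{(\by_p\to\by_q)\in T}k_{\by_p\to\by_q}$, the sum over spanning trees $T$ of $L_\alpha$ oriented toward $\by_i$; each $K_i$ is a polynomial in $\bk$, strictly positive on $\RR_{>0}^{E}$. Hence $\bk\in\mK(G)$ iff there exist $\bx^*\in\RR_{>0}^n$ and $c_1,\dots,c_\ell>0$ with $(\bx^*)^{\by_i}=c_\alpha K_i(\bk)$ for all $\by_i\in L_\alpha$.

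Taking logarithms linearizes this: it says the vector $\big(\log K_i(\bk)\big)_{i=1}^m$ lies in the column span of the $m\times(n+\ell)$ matrix $\bY$ whose $i$-th row is $(\by_i^{\intercal}\mid \be_{\alpha(i)}^{\intercal})$, where $\alpha(i)$ indexes the linkage class of $\by_i$. A direct computation, using that weak reversibility makes $\mSG=\spn\{\by_j-\by_i:\by_i,\by_j\text{ in a common linkage class}\}$, shows $\operatorname{rank}\bY=\dim\mSG+\ell$, so the orthogonal complement $\Omega:=(\text{column span of }\bY)^{\perp}\subseteq\RR^m$ has dimension $\delta:=m-\dim\mSG-\ell$. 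Choosing a lattice basis $w^{(1)},\dots,w^{(\delta)}$ of $\Omega\cap\mathbb{Z}^m$ and exponentiating back, we obtain
\begin{equation}\notag
\mK(G)=\Big\{\,\bk\in\RR_{>0}^{E}\ :\ \textstyle\prod_{i:\,w^{(r)}_i>0}K_i(\bk)^{w^{(r)}_i}=\prod_{i:\,w^{(r)}_i<0}K_i(\bk)^{-w^{(r)}_i},\ \ r=1,\dots,\delta\,\Big\},
\end{equation}
i.e. $\mK(G)$ is cut out by $\delta$ relations that are \emph{binomial in the coordinate functions} $K_1,\dots,K_m$. Regarding $\bk\mapsto\big(K_1(\bk),\dots,K_m(\bk)\big)$ (suitably supplemented by further polynomials so as to be an invertible map on the relevant open set) as new coordinates — equivalently, realizing $\mK(G)$ as the positive real part of the affine toric variety attached to the lattice $\Omega$, pulled back along this monomial map — then yields that $\mK(G)$ is a toric variety up to a polynomial change of coordinates. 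Note that $\delta=0$ recovers $\mK(G)=\RR_{>0}^{E}$, in agreement with the Deficiency Zero Theorem.

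I expect the main obstacle to be the Matrix--Tree step together with the final identification: one must verify that the tree constants genuinely furnish a basis of $\ker\bA_\bk$ and are honest positive polynomials, and then carry out the bookkeeping (the rank identity $\operatorname{rank}\bY=\dim\mSG+\ell$, and, more delicately, arranging the map built from the $K_i$ into a bona fide coordinate change so that ``toric variety'' is literally justified rather than merely ``cut out by binomials in the $K_i$''). The remaining steps — the Laplacian reformulation, the log-linearization, and the exponentiation back to binomial equations — are routine linear algebra once the tree constants are in hand.
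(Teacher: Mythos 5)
Your proposal correctly reconstructs the standard Matrix--Tree argument from the cited reference~\cite{CraciunDickensteinShiuSturmfels2009}; the paper itself offers no proof of Theorem~\ref{thm:homeo}, only the citation, and your route (identifying $\ker\bA_\bk$ via the tree constants $K_i$, log-linearizing against the matrix $\bY$ with rank $\dim\mSG+\ell$, and exponentiating back to $\delta$ binomial relations in the $K_i$, so that $\bk\mapsto(K_1(\bk),\dots,K_m(\bk))$ is the asserted polynomial change of coordinates) is exactly the one used there. The one caveat worth flagging is that extracting a lattice basis of $\Omega\cap\mathbb{Z}^m$ requires $\Omega$ to be a rational subspace, which holds in the intended setting $V\subset\mathbb{Z}_{\geq 0}^n$ of the cited source but is not automatic for the arbitrary real vertex coordinates permitted by this paper's definition of an E-graph.
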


\begin{definition}
Let $G=(V, E)$ be an E-graph. 
Denote a \defi{flux vector} by $\bJ = (J_{\byi \to \byj})_{\byi \to \byj \in E} \in \RR_{>0}^E$, where $J_{\byi \to \byj}$ is called the \defi{flux} on the edge $\byi \to \byj \in E$. The \defi{associated flux system} on $\RR_{>0}^n$ generated by $(G, \bJ)$ is 
\begin{equation} \label{eq:flux}
 \frac{\mathrm{d} \bx}{\mathrm{d} t} 
= \sum_{\byi \to \byj \in E} J_{\byi \to \byj} 
(\byj - \byi).
\end{equation}
\end{definition}

\begin{definition}
\label{def:fluxVectors}
Consider the flux system generated by $(G, \bJ)$ in \eqref{eq:flux}. 
A flux vector $\bJ \in \RR_{>0}^E$ is called a \defi{steady flux vector} if 
\begin{equation}
\frac{\mathrm{d} \bx}{\mathrm{d} t} 
= \sum_{\byi \to \byj \in E} J_{\byi \to \byj} 
(\byj - \byi) = \mathbf{0}.
\end{equation}
Moreover, a steady flux vector $\bJ \in \RR_{>0}^E$ is called a \defi{complex-balanced flux vector}, if for every vertex $\by_0 \in V$,
\begin{equation}
\sum_{\by \to \by_0 \in E} J_{\by \to \by_0} 
= \sum_{\by_0 \to \by' \in E} J_{\by_0 \to \by'}.
\end{equation} 
We say the pair $(G, \bJ)$ forms a \defi{complex-balanced flux system}. In addition, we denote the set of all complex-balanced flux vectors on $G$ by
\begin{equation} 
\mathcal{J}(G):=
\{\bJ \in \RR_{>0}^{E} \mid \bJ  \text{ is a complex-balanced flux vector on $G$}\}.
\end{equation}
\end{definition}

Analogous to complex-balanced systems, complex-balanced flux systems also have a connection with E-graphs.

\begin{lemma}[\cite{craciun2020efficient}]
Every E-graph which permits a complex-balanced flux system is weakly reversible.
On the other side, every E-graph which is weakly reversible permits complex-balanced flux systems.
\end{lemma}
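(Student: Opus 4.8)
The plan is to establish the two implications by elementary directed-graph arguments; the complex-balanced flux conditions in Definition~\ref{def:fluxVectors} involve only the combinatorial structure of $G$, not the Euclidean positions of its vertices, so nothing about the embedding enters. In what follows I repeatedly use that, by the definition given above, an E-graph is weakly reversible precisely when every edge lies on a directed cycle.

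For the forward direction, I would suppose $(G, \bJ)$ is a complex-balanced flux system with $\bJ \in \RR_{>0}^E$ and show that every edge of $G$ lies on a directed cycle. Fix an edge $\by_i \to \by_j$, and let $R \subseteq V$ be the set of vertices reachable from $\by_j$ along directed paths, with $\by_j \in R$. By construction $R$ is closed under out-edges: if $\by_p \in R$ and $\by_p \to \by_q \in E$ then $\by_q \in R$, so no edge of $E$ leaves $R$. Summing the complex-balanced flux equation over all $\by_0 \in R$, every edge with both endpoints in $R$ contributes its flux once to each side and cancels, while edges with exactly one endpoint in $R$ survive; since no edge leaves $R$, what remains is $\sum_{e \text{ entering } R} J_e = 0$. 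Strict positivity of $\bJ$ then forces there to be no edge entering $R$; but if $\by_i \notin R$, then $\by_i \to \by_j$ is such an edge, a contradiction. Hence $\by_i \in R$, i.e. there is a directed path from $\by_j$ to $\by_i$, which together with $\by_i \to \by_j$ forms a directed cycle through that edge. Since the edge was arbitrary, $G$ is weakly reversible.

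For the converse, I would suppose $G$ is weakly reversible, so every edge $e \in E$ lies on some directed cycle $C_e \subseteq G$. For a directed cycle $C$, the flux vector $\mathbf{1}_C$ that assigns flux $1$ to each edge of $C$ and $0$ elsewhere has equal incoming and outgoing flux at every vertex, hence is a complex-balanced flux vector. The complex-balanced flux conditions are linear and homogeneous in $\bJ$, so $\bJ := \sum_{e \in E} \mathbf{1}_{C_e}$ is again complex-balanced, and it is strictly positive on every edge since $e \in C_e$. Thus $\bJ \in \RR_{>0}^E$ and $(G, \bJ)$ is a complex-balanced flux system.

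I do not expect a genuine obstacle; the step deserving the most care is the summation/cancellation argument in the forward direction, where one must check that edges internal to $R$ cancel and boundary edges survive with the correct signs. It is also worth flagging that strict positivity of $\bJ$ is used essentially there — a merely nonnegative circulation may vanish on edges that lie on no cycle, so it alone would not force weak reversibility.
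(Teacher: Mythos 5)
Your proof is correct. Note that the paper itself does not prove this lemma --- it is imported from \cite{craciun2020efficient} --- so there is no in-paper argument to compare against; what you have written is a valid self-contained substitute. The forward direction's cut argument is sound: the reachable set $R$ of $\by_j$ has no outgoing edges, summing the vertex-balance identities over $\by_0 \in R$ cancels the edges internal to $R$ and leaves exactly ``total flux entering $R$ equals total flux leaving $R$ equals $0$,'' and strict positivity of $\bJ$ then forces $\by_i \in R$, so every edge lies on a directed cycle --- which is precisely the paper's definition of weak reversibility. The converse via $\bJ = \sum_{e \in E} \mathbf{1}_{C_e}$ is likewise fine. The one pedantic point you should add a line for: Definition~\ref{def:fluxVectors} requires a complex-balanced flux vector to be, first of all, a \emph{steady} flux vector, i.e.\ $\sum_{\by_i \to \by_j \in E} J_{\by_i \to \by_j}(\by_j - \by_i) = \mathbf{0}$, and you only verify the vertex-balance condition. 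This is harmless --- multiplying the balance at each vertex $\by_0$ by $\by_0$ and summing over $V$ gives
\begin{equation} \notag
\sum_{\by_i \to \by_j \in E} J_{\by_i \to \by_j}(\by_j - \by_i)
= \sum_{\by_0 \in V} \by_0 \Bigl( \sum_{\by \to \by_0 \in E} J_{\by \to \by_0} - \sum_{\by_0 \to \by' \in E} J_{\by_0 \to \by'} \Bigr) = \mathbf{0},
\end{equation}
so vertex balance implies the steady-flux condition --- but the step should be stated rather than left implicit.
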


Moreover, when flux vectors are constructed under mass-action kinetics, complex-balanced systems can be linked with complex-balanced flux systems.

\begin{lemma}[\cite{craciun2020efficient}]
Suppose $(G, \bk)$ is a complex-balanced system with a steady state $\bx^* \in \RR^n_{>0}$. Consider the flux vector $\bJ = (J_{\byi \to \byj})_{\byi \to \byj \in E}$ with $J_{\byi \to \byj} = k_{\byi \to \byj} (\bx^*)^{\by_i}$, then the pair $(G, \bJ)$ forms a complex-balanced flux system.
\end{lemma}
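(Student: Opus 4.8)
The plan is to verify directly that the flux vector $\bJ$ defined by $J_{\byi \to \byj} = k_{\byi \to \byj}(\bx^*)^{\by_i}$ satisfies each of the three defining properties of a complex-balanced flux system (Definition~\ref{def:fluxVectors}): positivity, the steady-flux condition, and the complex-balanced condition at every vertex.

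First I would check that $\bJ \in \RR_{>0}^E$. Each reaction rate constant is positive by definition, and $\bx^* \in \RR_{>0}^n$ makes every monomial $(\bx^*)^{\by_i}$ positive, so $J_{\byi \to \byj} = k_{\byi \to \byj}(\bx^*)^{\by_i} > 0$ for all edges, and $\bJ$ is a genuine flux vector. Next, substituting the definition of $\bJ$ into the left-hand side of \eqref{eq:flux} reproduces exactly $\sum_{\byi \to \byj \in E} k_{\byi \to \byj}(\bx^*)^{\by_i}(\byj - \byi)$, which equals $\mathbf{0}$ because $\bx^*$ is by hypothesis a positive steady state of the mass-action system $(G, \bk)$; hence $\bJ$ is a steady flux vector.

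The remaining step is the complex-balanced condition. Fixing an arbitrary vertex $\by_0 \in V$, I would rewrite the outflow and inflow of $\bJ$ at $\by_0$ using the definition of $\bJ$: the outflow $\sum_{\by_0 \to \by' \in E} J_{\by_0 \to \by'}$ equals $\sum_{\by_0 \to \by' \in E} k_{\by_0 \to \by'}(\bx^*)^{\by_0}$, since the weight $(\bx^*)^{\by_0}$ depends only on the common source $\by_0$ and factors out of the sum, while the inflow $\sum_{\by \to \by_0 \in E} J_{\by \to \by_0}$ equals $\sum_{\by \to \by_0 \in E} k_{\by \to \by_0}(\bx^*)^{\by}$. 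Equality of these two expressions is precisely the complex-balanced condition for $(G, \bk)$ at $\by_0$ from Definition~\ref{def:cb_system}, and since $\by_0$ was arbitrary, $(G, \bJ)$ forms a complex-balanced flux system.

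There is no substantial obstacle here: the statement is essentially a change of variables, re-encoding the complex-balanced equations for $(G, \bk)$ at the steady state $\bx^*$ as the complex-balanced flux equations for $(G, \bJ)$. The only point deserving a moment's attention is that the monomial weight attached to a reaction depends only on its source vertex, which is exactly what lets it factor out of the outflow sum at each vertex.
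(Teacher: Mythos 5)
Your proof is correct. The paper states this lemma as a citation from~\cite{craciun2020efficient} and gives no proof of its own, and your direct verification --- positivity of $\bJ$, the steady-flux condition read off from the steady-state equation, and the vertex-wise balance obtained by factoring the common weight $(\bx^*)^{\by_0}$ out of the outflow sum --- is exactly the standard argument one would write. The only step worth making explicit is the very last one: the hypothesis only says $\bx^*$ is a positive steady state, so to invoke the vertex-wise balance condition of Definition~\ref{def:cb_system} at $\bx^*$ you should first cite Theorem~\ref{thm:cb}(b), which guarantees that every positive steady state of a complex-balanced system is in fact complex-balanced.
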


As a direct consequence, we obtain the following remark.

\begin{remark}
\label{rmk:graph_KJ}
Let $G=(V, E)$ be an E-graph.
\begin{enumerate}
\item[(a)] If $G=(V,E)$ is weakly reversible, then $\mK (G) \neq \emptyset$ and $\mathcal{J}(G) \neq \emptyset$.

\item[(b)] If $G=(V,E)$ is not weakly reversible, then $\mK (G) = \mathcal{J}(G) = \emptyset$.
\end{enumerate}
\end{remark}

\subsection{Dynamical Equivalence}
\label{sec:dynamical_equivalence}

Under mass-action kinetics, different reaction networks can give rise to the same dynamical system. In this subsection, we introduce the \emph{dynamical equivalence} under which two mass-action systems share the same associated dynamics.

\begin{definition}[\cite{horn1972general,craciun2008identifiability}]
\label{def:de}
Two mass-action systems $(G, \bk) = (V, E, \bk)$ and $(G', \bk') = (V', E', \bk')$ are said to be \defi{dynamically equivalent}, if for every vertex\footnote{\label{footnote1} Note that when $\by_0 \not\in V$ or $\by_0 \not\in V'$, the side is considered as an empty sum} $\by_0 \in V \cup V'$,
\begin{equation}
\label{eq:DE}
\sum_{\by_0 \to \by \in E} k_{\by_0  \to \by} (\by - \by_0) 
= \sum_{\by_0 \to \by' \in E'} k'_{\by_0  \to \by'}  (\by' - \by_0).
\end{equation}
We let $(G, \bk) \sim (G', \bk')$ denote that 
two systems $(G, \bk)$ and $(G', \bk')$ are dynamically equivalent.
\end{definition} 


\begin{example}
Figure~\ref{fig:dyn_equiv} gives an example of two dynamically equivalent mass-action systems. 

\begin{figure}[!ht]
\centering
\includegraphics[scale=0.4]{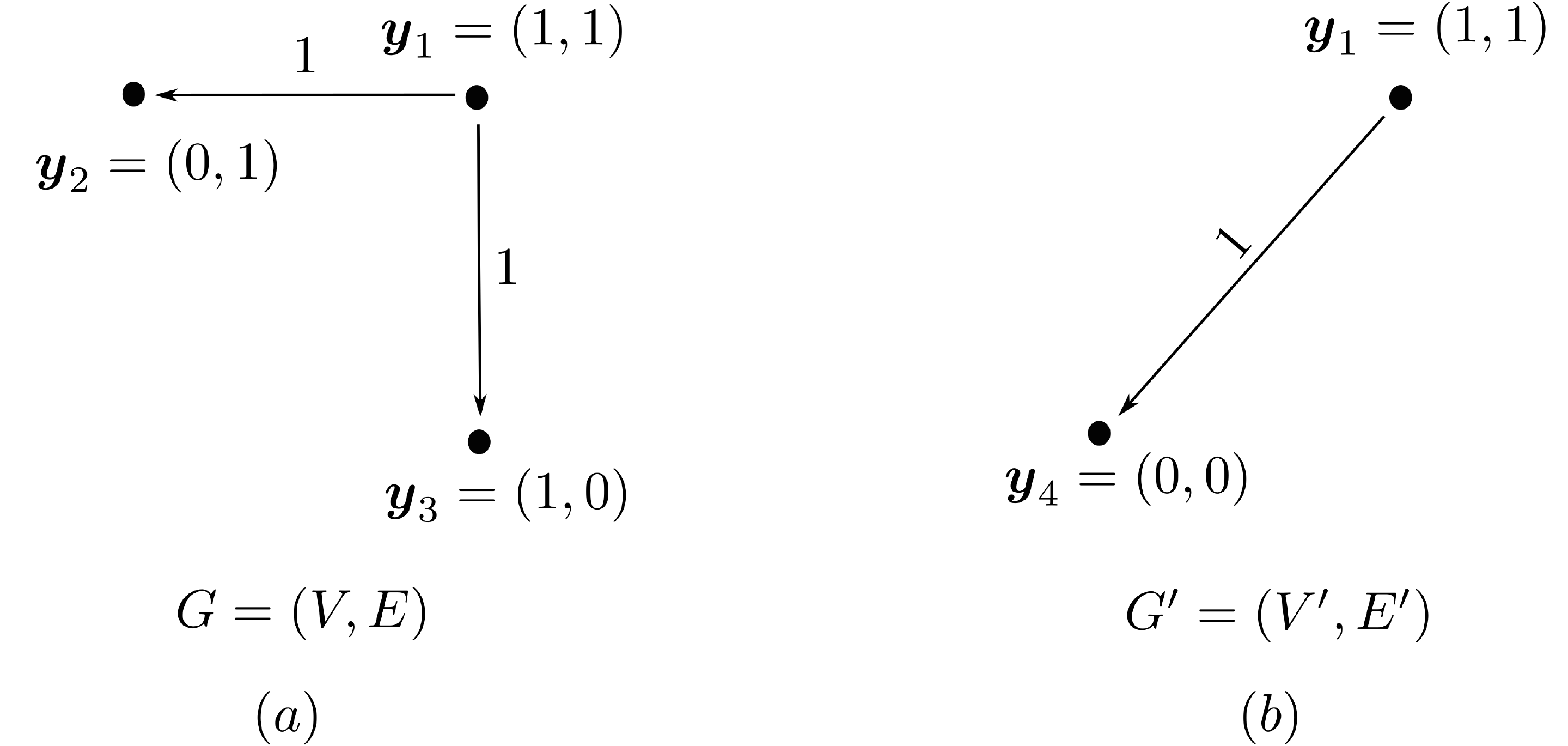}
\caption{\small The mass-action systems in (a) and (b) are dynamically equivalent.}
\label{fig:dyn_equiv}
\end{figure} 

Since $\by_1$ is the only source vertex in systems $G$ and $G'$, it suffices to check whether two systems satisfy Equation~\eqref{eq:DE} on the vertex $\by_1$.
For the system $(G, \bk)$, we have
\begin{equation}
\sum_{\by_1 \rightarrow \by \in E} k_{\by_1 \rightarrow \by} (\by - \by_1) =
k_{12} \begin{pmatrix} -1 \\ 0 \end{pmatrix}
+ k_{13} \begin{pmatrix} 0 \\ -1 \end{pmatrix}
= \begin{pmatrix} -1 \\ -1 \end{pmatrix}.
\end{equation}
For the system $(G', \bk')$, we have
\begin{equation}
\sum_{\by_1 \rightarrow \by' \in E'} k_{\by_1 \rightarrow \by'} (\by' - \by_1) =
k'_{12} \begin{pmatrix} -1 \\ -1 \end{pmatrix}
= \begin{pmatrix} -1 \\ -1 \end{pmatrix}.
\end{equation}
Hence, two systems $(G, \bk)$ and $(G', \bk')$ are dynamically equivalent.
\qed
\end{example}

\begin{remark}
Follow Definition \ref{def:de}, two mass-action systems $(G, \bk)$ and $(G', \bk')$ are dynamically equivalent if and only if for all $\bx \in \RR_{>0}^{n}$,
\begin{equation}
\label{eq:eqDE}
\sum_{\by_1 \to \by_2 \in E} k_{\by_1  \to \by_2} \bx^{\by_1} (\by_2 - \by_1) 
= \sum_{\by_1' \to \by_2' \in E'} k'_{\by'_1  \to \by'_2} \bx^{\by'_1} (\by'_2 - \by'_1).
\end{equation}
\end{remark}

\begin{remark}
Suppose $(G, \bk)$ and $(G', \bk')$ are two dynamically equivalent mass-action systems. Then $(G, \bk)$ is realizable on $G'$ and $(G', \bk')$ is realizable on $G$.
\end{remark}

\begin{definition} 
\label{def:d0}
Let $G=(V, E)$ be an E-graph and let $\bd = (d_{\by \to \by'})_{\by \to \by' \in E} \in \RR^{|E|}$. We define the set $\mD(G)$ as
\begin{equation} \notag
\mD (G) :=
\{\bd \in \RR^{|E|} \, \bigg| \, \sum_{\by_0 \to \by \in E} d_{\by_0  \to \by} (\by - \by_0) = \mathbf{0} \ \text{for every vertex } \by_0 \in V
\}.
\end{equation}
\end{definition} 

\begin{definition} 
\label{def:FE}
Two flux systems $(G,\bJ) = (V, E, \bJ)$ and $(G', \bJ') = (V', E',\bJ')$ are said to be \defi{flux equivalent}, if for every vertex\footref{footnote1} $\by_0 \in V \cup V'$
\begin{equation}
\sum_{\by_0 \to \by \in E} J_{\by_0 \to \by} (\by - \by_0) 
= \sum_{\by'_0 \to \by' \in E'} J'_{\by_0 \to \by'} (\by' - \by'_0).
\end{equation}
We let $(G, \bJ) \sim (G', \bJ')$ denote that two systems $(G, \bJ)$ and $(G', \bJ')$ are flux equivalent. 
\end{definition} 

\begin{definition} 
\label{def:j0}
Let $G=(V, E)$ be an E-graph and let $\bJ = ({J}_{\byi \to \byj})_{\byi \to \byj \in E} \in \RR^E$.
We define the set $\eJ (G)$ as
\begin{equation} \label{eq:J_0}
\eJ (G): =
\{{\bJ} \in \mD (G) \, \bigg| \, \sum_{\by \to \by_0 \in E} {J}_{\by \to \by_0} 
= \sum_{\by_0 \to \by' \in E} {J}_{\by_0 \to \by'} \ \text{for every vertex } \by_0 \in V
\}.
\end{equation}
\end{definition} 

\begin{example}
\label{ex:D0J0G2G3}
Figure~\ref{fig:graphs_D_J} illustrates two E-graphs $G_1 = (V_1, E_1)$ and $G_2 = (V_2, E_2)$. Here we compute $\mD$ and $\eJ$ for both E-graphs. 

\begin{figure}[H]
\centering
\includegraphics[scale=0.4]{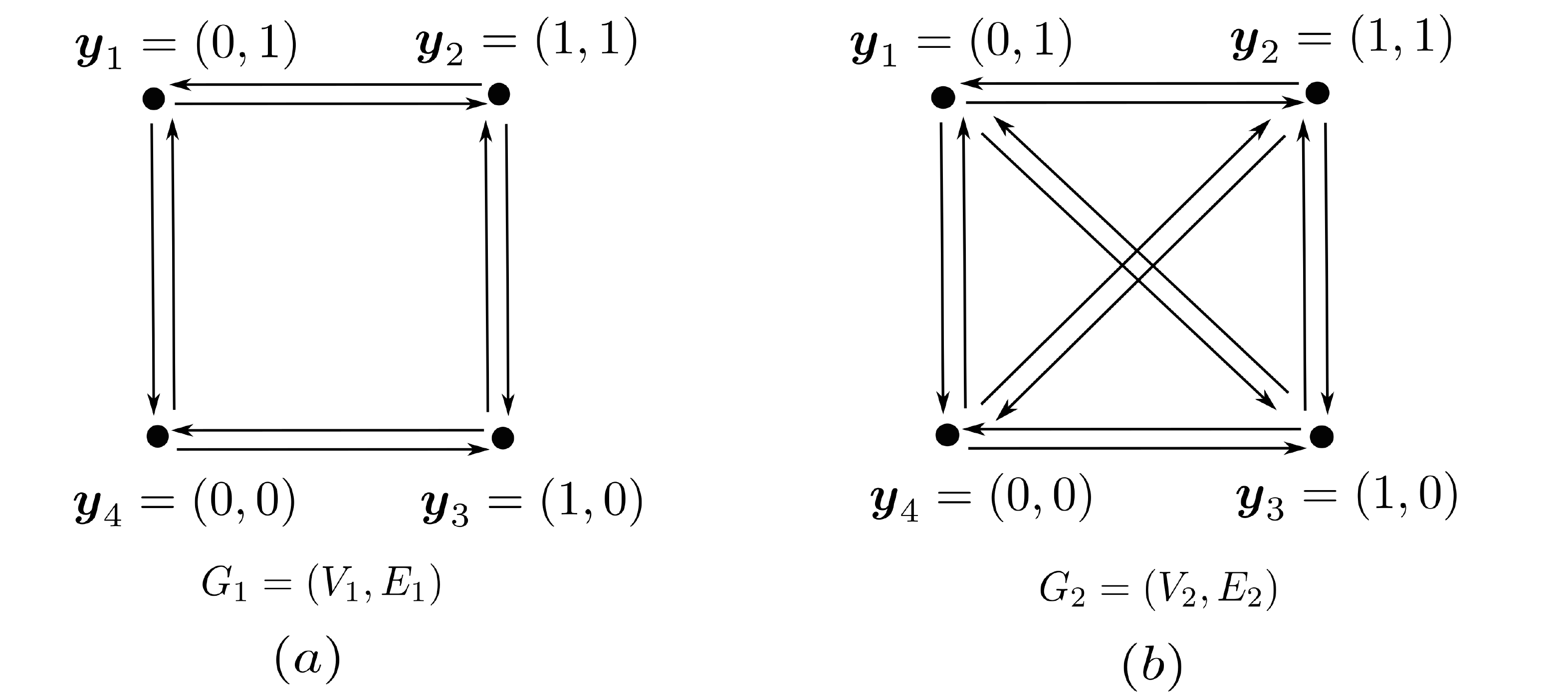}
\caption{Two weakly reversible E-graphs $G_1$ and $G_2$.}
\label{fig:graphs_D_J}
\end{figure} 

\begin{enumerate}
\item[(a)] For E-graph $G_1$, we consider every vertex $\by \in V_1$ and compute that all corresponding edges $\by \to \by' \in E_1$ are linearly independent. Thus, we derive that
\[
\mD = \eJ = \{ \mathbf{0} \}.
\]

\item[(b)] For E-graph $G_2$, we start with the vertex $\by_1 \in V_1$. Note that the reaction vectors $\{ \by_i - \by_1 \}$ for $i = 2,3,4$ are linearly dependent as follows:
\[
\by_3 - \by_1 = (\by_2 - \by_1) + (\by_4 - \by_1).
\]
Thus, we find a vector in $\mD(G_2)$ and denote it by $\bv_1$, such that
\begin{equation} \notag
\bv_{1, r} = 
\begin{cases}
1, & \text{ if } r = \by_1 \to \by_2 \text{ or } \by_1 \to \by_4, \\
-1, & \text{ if } r = \by_1 \to \by_3, \\
0, & \text{ Otherwise}.
\end{cases}
\end{equation}
For the vertex $\by_2 \in V_1$ and the corresponding reaction vectors $\{ \by_i - \by_2 \}$ for $i = 1,3,4$, we have
\[
\by_4 - \by_2 = (\by_1 - \by_2) + (\by_3 - \by_2).
\]
Thus we find another vector $\bv_2 \in \mD(G_2)$, such that
\begin{equation} \notag
\bv_{2, r} = 
\begin{cases}
1, & \text{ if } r = \by_2 \to \by_1 \text{ or } \by_2 \to \by_3, \\
-1, & \text{ if } r = \by_2 \to \by_4, \\
0, & \text{ Otherwise}.
\end{cases}
\end{equation}

Similarly, for vertices $\by_3, \by_4 \in V_1$, we find two vectors $\bv_3, \bv_4 \in \mD(G_2)$, such that
\begin{equation} \notag
\bv_{3, r} = 
\begin{cases}
1, & \text{ if } r = \by_3 \to \by_2 \text{ or } \by_3 \to \by_4, \\
-1, & \text{ if } r = \by_3 \to \by_1, \\
0, & \text{ Otherwise}.
\end{cases} 
\text{ and } \
\bv_{4, r} = 
\begin{cases}
1, & \text{ if } r = \by_4 \to \by_1 \text{ or } \by_4 \to \by_3, \\
-1, & \text{ if } r = \by_4 \to \by_2, \\
0, & \text{ Otherwise}.
\end{cases}
\end{equation}
Therefore, we derive that 
\[
\mD(G_2) = \spn \{ \bv_1, \bv_2, \bv_3, \bv_4 \},
\]
and $\dim (\mD(G_2)) =  4$.

\medskip

Next, we compute the flux changes on every vertex after applying flux vectors $\{ \bv_i\}^{4}_{i=1}$ in $\mD(G_2)$. Then we find that
\[
\bv_1 + \bv_2, \ \bv_1 + \bv_3, \ \bv_1 - \bv_4 \in \eJ (G_2),
\]
and these three flux vectors are linearly independent. 
Further, $\bv_1 \notin \eJ (G_2)$ because of the flux change on $\by_4$ is not balanced to zero. 
On the other hand, note that $\eJ (G_2) \subset \mD (G_2)$ and $\dim (\mD(G_2)) =  4$.
Therefore, we conclude that 
\[
\eJ (G_2) = \spn \{ \bv_1 + \bv_2, \ \bv_1 + \bv_3, \ \bv_1 - \bv_4 \},
\]
and $\dim (\eJ(G_2)) =  3$.
\end{enumerate}
\qed
\end{example}

\begin{lemma}
\label{lem:d0}
Consider two mass-action systems $(G, \bk)$ and $(G, \bk')$, then $(G, \bk) \sim (G, \bk')$ if and only if $\bk' - \bk \in \mD (G)$.    
\end{lemma}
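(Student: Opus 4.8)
The plan is to unwind Definition~\ref{def:de} and Definition~\ref{def:d0} and observe that they agree term by term once one passes to the difference of the rate vectors. First I would record that both systems live on the same E-graph $G=(V,E)$, so in \eqref{eq:DE} the index set $V\cup V'$ is just $V$, and for each fixed $\by_0\in V$ the two sums $\sum_{\by_0\to\by\in E}$ on the two sides range over exactly the same set of edges. In particular the empty-sum convention of the footnote never needs to be invoked, and a termwise subtraction of the two sides is legitimate.

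Next, for a fixed $\by_0\in V$ I would subtract the right-hand side of \eqref{eq:DE} from the left-hand side; by linearity,
\begin{equation} \notag
\sum_{\by_0 \to \by \in E} k_{\by_0 \to \by}(\by - \by_0) \;-\; \sum_{\by_0 \to \by \in E} k'_{\by_0 \to \by}(\by - \by_0) \;=\; \sum_{\by_0 \to \by \in E} \big(k_{\by_0 \to \by} - k'_{\by_0 \to \by}\big)\,(\by - \by_0),
\end{equation}
so the equality in \eqref{eq:DE} at $\by_0$ is equivalent to $\sum_{\by_0 \to \by \in E}\big(k_{\by_0 \to \by} - k'_{\by_0 \to \by}\big)(\by - \by_0) = \mathbf{0}$. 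Setting $\bd := \bk - \bk' \in \RR^{|E|}$, the two systems $(G,\bk)$ and $(G,\bk')$ are dynamically equivalent iff this holds for \emph{every} $\by_0 \in V$, which is verbatim the defining condition for $\bd \in \mD(G)$ in Definition~\ref{def:d0}. Since every step above is a reversible equivalence, both implications are obtained simultaneously; and because $\mD(G)$ is a linear subspace, $\bk-\bk'\in\mD(G)$ iff $\bk'-\bk\in\mD(G)$, which gives the statement in the form asserted.

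I do not expect a genuine obstacle here — this is essentially a bookkeeping argument. The only points worth making explicit are that the two edge-indexed sums share the same index set (so the edgewise subtraction makes sense) and that $\mD(G)$ is symmetric under negation (so working with $\bk-\bk'$ versus $\bk'-\bk$ is immaterial). If one wanted, the same computation applied to the pointwise characterization \eqref{eq:eqDE} shows $(G,\bk)\sim(G,\bk')$ iff $\sum_{\by_0\to\by\in E}(k_{\by_0\to\by}-k'_{\by_0\to\by})\bx^{\by_0}(\by-\by_0)\equiv 0$ on $\RR^n_{>0}$, but this stronger-looking reformulation is not needed for the lemma as phrased.
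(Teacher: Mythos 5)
Your proposal is correct and follows essentially the same route as the paper's own proof: subtract the two sides of \eqref{eq:DE} at each vertex, recognize the resulting condition as the defining condition of $\mD(G)$, and note that every step is reversible. Your extra remarks about the shared index set and the symmetry of $\mD(G)$ under negation are fine points the paper glosses over, but they do not change the argument.
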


\begin{proof}
Suppose that $(G, \bk) \sim (G, \bk')$. 
Then for every vertex $\by_0 \in V$, we have
\begin{equation} \notag
\sum_{\by_0 \to \by \in E} k_{\by_0  \to \by} (\by - \by_0) 
= \sum_{\by_0 \to \by' \in E} k'_{\by_0  \to \by}  (\by - \by_0).
\end{equation}
This is equivalent to 
\begin{equation} \notag
\sum_{\by_0 \to \by \in E} (k_{\by_0  \to \by} - k'_{\by_0  \to \by} ) (\by - \by_0) 
= \mathbf{0}.
\end{equation}
This implies that
\begin{equation} \notag
\bk' - \bk = (k'_{\by_0  \to \by} - k_{\by_0  \to \by})_{\by_0  \to \by \in E} \in \mD (G).
\end{equation}
Reversing the steps above, it is clear that given $\bk' - \bk \in \mD (G)$, we can obtain $(G, \bk) \sim (G, \bk')$.
\end{proof}

\begin{lemma}
\label{lem:j0}
Consider two flux systems $(G, \bJ)$ and $(G, \bJ')$, then
\begin{enumerate}
\item[(a)] $(G, \bJ) \sim (G, \bJ')$ if and only if $\bJ' - \bJ \in \mD (G)$.

\item[(b)] Further, if $(G, \bJ)$ and $(G, \bJ')$ are both complex-balanced flux systems, then $(G, \bJ) \sim (G, \bJ')$ if and only if $\bJ' - \bJ \in \eJ(G)$.
\end{enumerate} 
\end{lemma}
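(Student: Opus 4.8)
The plan is to handle part (a) first as a direct analogue of Lemma~\ref{lem:d0}, with flux vectors in place of reaction rate vectors. Since both flux systems are carried by the \emph{same} E-graph $G=(V,E)$, the vertex set $V\cup V'$ appearing in Definition~\ref{def:FE} is just $V$ and every sum ranges over the same edge set, so the empty-sum convention in the accompanying footnote plays no role; I would record this at the outset. Unwinding Definition~\ref{def:FE}, $(G,\bJ)\sim(G,\bJ')$ holds precisely when $\sum_{\by_0\to\by\in E}(J'_{\by_0\to\by}-J_{\by_0\to\by})(\by-\by_0)=\mathbf{0}$ for every vertex $\by_0\in V$, which by Definition~\ref{def:d0} is exactly the assertion $\bJ'-\bJ\in\mD(G)$. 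Each manipulation is an equivalence, so (a) follows; concretely I would reproduce the two-line computation from the proof of Lemma~\ref{lem:d0}.

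For part (b) I would prove the two implications separately, using (a) together with the complex-balanced conditions of Definition~\ref{def:fluxVectors}. Forward direction: if $(G,\bJ)\sim(G,\bJ')$, then (a) already gives $\bJ'-\bJ\in\mD(G)$, so it only remains to verify the extra vertex condition defining $\eJ(G)$ in Definition~\ref{def:j0}. Because both $(G,\bJ)$ and $(G,\bJ')$ are complex-balanced flux systems, for each $\by_0\in V$ we have $\sum_{\by\to\by_0\in E}J_{\by\to\by_0}=\sum_{\by_0\to\by'\in E}J_{\by_0\to\by'}$ and the same identity with $\bJ'$ in place of $\bJ$; subtracting these two identities vertex by vertex shows $\bJ'-\bJ$ satisfies the same balance relation, hence $\bJ'-\bJ\in\eJ(G)$. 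Backward direction: if $\bJ'-\bJ\in\eJ(G)$, then since $\eJ(G)\subseteq\mD(G)$ by definition, part (a) immediately gives $(G,\bJ)\sim(G,\bJ')$; note this direction uses neither the complex-balanced hypothesis nor positivity of the flux vectors.

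I do not anticipate a genuine obstacle: the statement is a bookkeeping consequence of the definitions and of Lemma~\ref{lem:d0}. The only points deserving a sentence of care are (i) noting that working over a single graph $G$ trivializes the $V\cup V'$ and empty-sum subtleties of Definition~\ref{def:FE}, and (ii) being explicit that the complex-balanced assumption enters only in the forward direction of (b), precisely to upgrade membership in $\mD(G)$ to membership in $\eJ(G)$.
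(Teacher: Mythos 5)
Your proposal is correct and follows essentially the same route as the paper: part (a) by unwinding Definition~\ref{def:FE} and Definition~\ref{def:d0} exactly as in Lemma~\ref{lem:d0}, and part (b) by subtracting the two complex-balanced identities to upgrade membership in $\mD(G)$ to membership in $\eJ(G)$, with the converse following from $\eJ(G)\subseteq\mD(G)$. Your explicit remarks that the $V\cup V'$ subtlety is vacuous for a single graph and that the complex-balanced hypothesis is only needed in the forward direction of (b) are accurate refinements of the paper's terser ``and vice versa.''
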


\begin{proof}
\begin{enumerate}
\item[(a)] This follows directly from Definition \ref{def:j0} and Lemma \ref{lem:d0}.

\item[(b)] Since both $(G, \bJ)$ and $(G, \bJ')$ are complex-balanced flux systems, for every vertex $\by_0 \in V$
\begin{equation} \notag
\sum_{\by \to \by_0 \in E} J_{\by \to \by_0} 
= \sum_{\by_0 \to \by' \in E} J_{\by_0 \to \by'}
\ \text{ and } \ 
\sum_{\by \to \by_0 \in E} J'_{\by \to \by_0} 
= \sum_{\by_0 \to \by' \in E} J'_{\by_0 \to \by'}.
\end{equation}
Thus, we have
\begin{equation} \notag
\sum_{\by \to \by_0 \in E} (J_{\by \to \by_0} - J'_{\by \to \by_0})
= \sum_{\by_0 \to \by' \in E} (J_{\by_0 \to \by'} - J'_{\by_0 \to \by'}).
\end{equation}
Suppose $(G, \bJ) \sim (G, \bJ')$, we derive that
\begin{equation} \notag
\bJ' - \bJ = (J'_{\by_0  \to \by} - J_{\by_0  \to \by})_{\by_0  \to \by \in E} \in \eJ(G)
\end{equation}
and vice versa.
\end{enumerate}
\end{proof}

\begin{remark}
\label{rmk:linear_subspace}
Let $G=(V, E)$ be an E-graph. Then both
$\mD (G)$ and $\eJ(G)$ are linear subspaces of $\RR^E$.
\end{remark}

\medskip

The following proposition shows the connection between dynamical equivalence and flux equivalence when the flux vector is constructed under mass-action kinetics.

\begin{proposition}[\cite{craciun2020efficient}]
\label{prop:craciun2020efficient}
Let $(G, \bk)$ and $(G', \bk')$ be two mass-action systems and let $\bx \in \RR_{>0}^n$. Define the flux vector $\bJ (\bx) = (J_{\by \to \by'})_{\by \to \by' \in E}$ on $G$, such that for every $\by \to \by' \in E$
\begin{equation}
J_{\by \to \by'} = k_{\by \to \by'} \bx^{\by}.
\end{equation}
Further, define the flux vector $\bJ' (\bx) = (J'_{\by \to \by'})_{\by \to \by' \in E'}$ on $G'$, such that for every $\by \to \by' \in E$
\begin{equation}
J'_{\by \to \by'} = k'_{\by \to \by'} \bx^{\by}.
\end{equation} 
Then the following are equivalent:
\begin{enumerate}

\item[(a)] the mass-action systems $(G, \bk)$ and $(G', \bk')$ are dynamically equivalent.

\item[(b)] the flux systems $(G, \bJ(\bx))$ and $(G', \bJ'(\bx))$ are flux equivalent for all $\bx \in \RR_{>0}^n$.

\item[(c)] the flux systems $(G, \bJ(\bx))$ and $(G', \bJ'(\bx))$ are flux equivalent for some $\bx \in \RR_{>0}^n$
\end{enumerate}
\end{proposition}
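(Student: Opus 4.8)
The plan is to prove the cycle of implications $(a) \Rightarrow (b) \Rightarrow (c) \Rightarrow (a)$, using the characterization of dynamical equivalence from the remark following Definition~\ref{def:de} (equation~\eqref{eq:eqDE}) as the bridge. The key observation is that both the dynamical-equivalence condition and the flux-equivalence condition are vertex-by-vertex statements of the same shape, and the substitution $J_{\by\to\by'} = k_{\by\to\by'}\bx^{\by}$ converts one into the other once we notice that, for a \emph{fixed} source vertex $\by_0$, the factor $\bx^{\by_0}$ is common to every term in the sum over edges leaving $\by_0$.

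First I would prove $(a)\Rightarrow(b)$. Fix $\bx \in \RR_{>0}^n$ and fix a vertex $\by_0 \in V \cup V'$. Since $(G,\bk)\sim(G',\bk')$, equation~\eqref{eq:eqDE} holds for this $\bx$; but the left-hand side of~\eqref{eq:eqDE}, restricted to terms with source $\by_0$, is exactly $\bx^{\by_0}\sum_{\by_0\to\by\in E} k_{\by_0\to\by}(\by-\by_0) = \bx^{\by_0}\sum_{\by_0\to\by\in E} J_{\by_0\to\by}(\by-\by_0)$, and similarly on the $G'$ side. Dividing through by the nonzero scalar $\bx^{\by_0}$ (and using the empty-sum convention of footnote~\ref{footnote1} when $\by_0 \notin V$ or $\by_0 \notin V'$) gives precisely the flux-equivalence identity of Definition~\ref{def:FE} at $\by_0$. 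Since $\by_0$ and $\bx$ were arbitrary, $(G,\bJ(\bx))\sim(G',\bJ'(\bx))$ for all $\bx$. The implication $(b)\Rightarrow(c)$ is immediate since $\RR_{>0}^n$ is nonempty.

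For $(c)\Rightarrow(a)$, suppose the flux systems are flux equivalent at some particular $\bx_* \in \RR_{>0}^n$. Running the argument of the previous paragraph in reverse at each vertex $\by_0$: multiplying the flux-equivalence identity at $\by_0$ by $\bx_*^{\by_0}$ recovers, term by term, the equality of the $\by_0$-blocks in~\eqref{eq:DE} with the rate constants $k,k'$ in place — that is, $(G,\bk)\sim(G',\bk')$ by Definition~\ref{def:de}, since~\eqref{eq:DE} is checked one source vertex at a time and the scalar $\bx_*^{\by_0}>0$ can be cancelled. I do not expect any real obstacle here: the content is entirely the bookkeeping that the ODE right-hand side~\eqref{eq:mass_action} groups naturally by source vertex, so that each monomial $\bx^{\by_i}$ pulls out of the corresponding vertex block. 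The only points requiring a modicum of care are (i) consistently invoking the empty-sum convention for vertices lying in only one of $V$, $V'$, and (ii) noting that flux equivalence at a \emph{single} $\bx_*$ already suffices precisely because, vertex by vertex, dividing by $\bx_*^{\by_0}$ leaves an identity among reaction vectors that no longer involves $\bx$ at all — hence reinstating any other $\bx$ is automatic, which is what makes $(c)$ as strong as $(b)$.
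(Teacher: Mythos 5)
The paper itself offers no proof of this proposition: it is quoted verbatim from \cite{craciun2020efficient}, so there is no internal argument to compare yours against. Judged on its own, your proof is correct and is the natural one: both Definition~\ref{def:de} and Definition~\ref{def:FE} are vertex-by-vertex identities of the same shape, and under the substitution $J_{\by_0\to\by}=k_{\by_0\to\by}\bx^{\by_0}$ the flux-equivalence identity at $\by_0$ is just the dynamical-equivalence identity at $\by_0$ multiplied by the positive scalar $\bx^{\by_0}$; since that scalar cancels, equivalence at one $\bx$ is the same as equivalence at all $\bx$, which is exactly why (c) is as strong as (b).

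One small wrinkle in your write-up of $(a)\Rightarrow(b)$: you invoke the aggregated identity~\eqref{eq:eqDE} and then ``restrict to terms with source $\by_0$.'' Taken literally that step is a non sequitur, since equality of two totals does not imply equality of the corresponding sub-sums grouped by source vertex. You do not need~\eqref{eq:eqDE} at all: Definition~\ref{def:de} already \emph{defines} dynamical equivalence as the per-vertex identity $\sum_{\by_0\to\by\in E}k_{\by_0\to\by}(\by-\by_0)=\sum_{\by_0\to\by'\in E'}k'_{\by_0\to\by'}(\by'-\by_0)$, so you should cite that directly and multiply both sides by $\bx^{\by_0}>0$. With that cosmetic repair the argument is complete, including your correct handling of the empty-sum convention for vertices lying in only one of $V$, $V'$.
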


\subsection{\texorpdfstring{$\RR$}{R}-Disguised Toric Locus}
\label{sec:disguised_locus}

In this subsection, we define the \emph{disguised toric locus and the $\RR$-disguised toric locus} to collect the reaction rates that allow complex-balanced realizations under dynamical equivalence.

\begin{definition} 
\label{def:de_realizable}
Let $G =(V,E)$ and $\hat{G} =(\hat{V}, \hat{E})$ be two E-graphs.
\begin{enumerate}


\item[(a)] Define the set $\mK_{\RR}(\hat{G}, G)$ as 
\begin{equation} \notag
\mK_{\RR}(\hat{G}, G) := \{ \hat{\bk} \in \mK (\hat{G}) \ \big| \ \text{the mass-action system } (\hat{G}, \hat{\bk}) \ \text{is $\RR$-realizable on } G \}.
\end{equation}

%

\item[(b)] Define the set $\dK(G, \hat{G})$ as
\begin{equation} \notag
\dK(G, \hat{G}) := \{ \bk \in \mathbb{R}^{E} \ \big| \ \text{the dynamical system} \ (G, \bk) \ \text{is disguised toric on } \hat{G} \},
\end{equation}
where the dynamical system generated by $(G, \bk)$ is referred to \eqref{eq:realization_ode}-\eqref{eq:realization}. Note that $\hat{\bk}$ may have non-positive components.


\item[(c)] Define the \defi{$\RR$-disguised toric locus} of $G$ as
\begin{equation} \notag
\dK(G) = \bigcup_{\tilde{G} \sqsubseteq G_{c}} \ \dK(G, \tilde{G}),
\end{equation}
where $\tilde{G}\sqsubseteq G_{c}$ represents that
$\tilde{G}$ is a weakly reversible subgraph of $ G_{c}$.
\end{enumerate}

\end{definition}


\begin{remark}
In general, of course, we need $\dK (G)$ to include $\dK(G, G')$ for {\em any} weakly reversible E-graph $G'$ (i.e., not just for $G' \sqsubseteq G_{c}$). On the other hand, due to results in~\cite{craciun2020efficient}, it turns out that, if a dynamical system generated by $G$ can be realized as toric by some $G'$, then there exists $G'' \sqsubseteq G_{c}$ that {\em also} can give rise to a toric realization of that dynamical system. Therefore, the above assumption that $G_i \sqsubseteq G_{c}$ still leads to the correct definition of $\dK (G)$. 
\end{remark}

\begin{remark}
Note that the definition of the disguised toric locus (denoted by $\mathcal{K}_{\rm disg}(G)$) is similar to Definition~\ref{def:de_realizable}, with the rate constants allowed to take only positive values, i.e., $\bk \in \mathbb{R}^{E}_{>0}$.
\end{remark}

\begin{definition}
\label{def:flux_realizable}
Let $(G', \bJ')$ be a flux system. We say it is 
\defi{$\RR$-realizable} on $G$ if there exists some $\bJ \in \mathbb{R}^{E}$, such that for every vertex\footref{footnote1} $\by_0 \in V \cup V'$,
\begin{equation} \notag
\sum_{\by_0 \to \byj \in E} J_{\by_0 \to \byj} 
(\byj - \by_0) 
= \sum_{\by_0 \to \bypj \in E'} J'_{\by_0 \to \bypj} 
(\bypj - \by_0).
\end{equation}
Further, we define the set $\mJ (G', G)$ as
\begin{equation} \notag
\mJ (G', G) := \{ \bJ' \in \mJ (G') \ \big| \ \text{the flux system } (G', \bJ') \ \text{is $\RR$-realizable on } G \}.
\end{equation}



\end{definition}

\begin{remark} \label{rmk:mJ_dK}
Let $G_1 = (V_1, E_1)$ be a weakly reversible E-graph and let $G = (V, E)$ be an E-graph. From Definition \ref{def:de_realizable}, it follows that $\dK (G, G_1)$ is empty if and only if $\mK_{\RR} (G_1, G)$ is empty. Meanwhile, from Proposition \ref{prop:craciun2020efficient}, it follows that $\dK (G, G_1)$ is empty if and only if $\mJ(G_1, G)$ is empty.
\end{remark}

Recall that a set $X$ is a \textbf{polyhedral cone} if $X = \{\bx \in \RR^n: M \bx \geq \textbf{0} \text{ for some matrix } M \}$. 
A \textbf{finite cone} is the conic combination of finitely many vectors.
The Minkowski-Weyl theorem \cite{2016david} states that every polyhedral cone is a finite cone and vice-versa.

\begin{lemma}
\label{lem:j_g1_g_cone}
Let $G_1 = (V_1, E_1)$ be a weakly reversible E-graph and let $G = (V, E)$ be an E-graph. Then there exists a set of vectors $\{ \bv_1, \bv_2, \ldots, \bv_k \} \subset \RR^{|E_1|}$, such that 
\begin{equation} \label{j_g1_g_generator}
\mJ (G_1, G) = \{ a_1 \bv_1 + \cdots a_k \bv_k \ | \ a_i \in \RR_{>0}, \bv_i \in \RR^{|E_1|} \},
\end{equation} 
and $\dim (\mJ (G_1, G)) =\dim ( \spn \{ \bv_1, \bv_2, \ldots, \bv_k \} )$.
Moreover, if $\mJ (G_1, G) \neq \emptyset$, then
\[
\eJ(G_1) \subseteq \spn \{ \bv_1, \bv_2, \ldots, \bv_k \}.
\]
\end{lemma}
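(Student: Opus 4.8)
The plan is to reduce the statement to elementary linear algebra together with the Minkowski--Weyl theorem. The first step is to encode $\RR$-realizability as a linear condition. For an E-graph $H=(W,F)$, let $\Psi_H\colon \RR^{F}\to\bigoplus_{\by_0\in V_1\cup V}\RR^n$ be the linear map whose $\by_0$-component is $\sum_{\by_0\to\by\in F}J_{\by_0\to\by}(\by-\by_0)$, with the convention (cf.\ the footnote to Definition~\ref{def:de}) that this is $\mathbf{0}$ when $\by_0\notin W$. By Definition~\ref{def:flux_realizable}, the flux system $(G_1,\bJ')$ is $\RR$-realizable on $G$ if and only if $\Psi_{G_1}(\bJ')\in\Psi_G(\RR^{E})$; hence the collection of such $\bJ'$ is the linear subspace $\mathcal{L}:=\Psi_{G_1}^{-1}\bigl(\Psi_G(\RR^{E})\bigr)\subseteq\RR^{E_1}$. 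Since the vertex-balance condition is linear and automatically implies steadiness, $\mathcal{J}(G_1)=\mathcal{C}\cap\RR_{>0}^{E_1}$ for a linear subspace $\mathcal{C}\subseteq\RR^{E_1}$. Writing $S:=\mathcal{C}\cap\mathcal{L}$, we obtain $\mJ(G_1,G)=S\cap\RR_{>0}^{E_1}$.

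The second step is to read off the generators. The set $S\cap\RR_{\geq 0}^{E_1}$ is a polyhedral cone, so by Minkowski--Weyl there are finitely many $\bv_1,\dots,\bv_k\in\RR^{|E_1|}$ with $S\cap\RR_{\geq 0}^{E_1}=\{a_1\bv_1+\cdots+a_k\bv_k\mid a_i\geq 0\}$. Assuming $\mJ(G_1,G)\neq\emptyset$ (otherwise one takes the empty list, or invokes Remark~\ref{rmk:mJ_dK} to exclude this case in applications), $S\cap\RR_{>0}^{E_1}$ is a nonempty relatively open subset of $S$, so $\spn\{\bv_1,\dots,\bv_k\}=\spn\bigl(S\cap\RR_{\geq 0}^{E_1}\bigr)=S$, which already yields the dimension claim $\dim\mJ(G_1,G)=\dim S=\dim\spn\{\bv_1,\dots,\bv_k\}$. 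The one point requiring genuine care is the identity $S\cap\RR_{>0}^{E_1}=\{a_1\bv_1+\cdots+a_k\bv_k\mid a_i\in\RR_{>0}\}$: one shows that $S\cap\RR_{>0}^{E_1}$ is exactly the relative interior of the cone $S\cap\RR_{\geq 0}^{E_1}$ (no coordinate of a relative-interior point can vanish, since perturbing toward a strictly positive point of $S$ would force that coordinate negative), and that the relative interior of a finitely generated cone is precisely the set of strictly positive conic combinations of its generators (averaging the representations that witness positivity of each coefficient). I expect this relative-interior bookkeeping to be the main --- and essentially only --- subtlety; everything else is formal.

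For the \emph{Moreover} part, still assuming $\mJ(G_1,G)\neq\emptyset$ we have $\spn\{\bv_1,\dots,\bv_k\}=S=\mathcal{C}\cap\mathcal{L}$ by the above. By Definitions~\ref{def:d0} and~\ref{def:j0}, $\eJ(G_1)=\mD(G_1)\cap\mathcal{C}=\ker\Psi_{G_1}\cap\mathcal{C}$. Since $\mathbf{0}=\Psi_G(\mathbf{0})\in\Psi_G(\RR^{E})$, every $\bd\in\ker\Psi_{G_1}$ satisfies $\Psi_{G_1}(\bd)=\mathbf{0}\in\Psi_G(\RR^{E})$, i.e.\ $\ker\Psi_{G_1}\subseteq\mathcal{L}$. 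Hence $\eJ(G_1)=\ker\Psi_{G_1}\cap\mathcal{C}\subseteq\mathcal{L}\cap\mathcal{C}=\spn\{\bv_1,\dots,\bv_k\}$, which completes the plan.
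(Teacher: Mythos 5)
Your proof is correct and follows essentially the same route as the paper: encode $\RR$-realizability and complex balance as linear conditions on $\RR^{E_1}$, intersect the resulting subspace with the nonnegative orthant, and apply Minkowski--Weyl. If anything, you are more careful than the paper at the one delicate step (identifying $S\cap\RR_{>0}^{E_1}$ with the set of strictly positive conic combinations via the relative interior), and your derivation of the \emph{Moreover} part from $\ker\Psi_{G_1}\subseteq\mathcal{L}$ is a cleaner substitute for the paper's perturbation argument.
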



\begin{proof}
If $\mJ (G_1, G) = \emptyset$, it is clear that $\dim (\mJ (G_1, G)) = 0$ and we conclude the lemma. Now suppose $\mJ (G_1, G) \neq \emptyset$. Consider any flux vector $\bJ_1 = (J_{1, \by \to \by'})_{\by \to \by' \in E_1} \in \mJ(G_1, G)$. From Definition \ref{def:flux_realizable}, there exists the flux vector $\bJ = (J_{\by \to \by'})_{\by \to \by' \in E} \in \RR^{|E|}$, such that
\[
(G, \bJ) \sim (G_1, \bJ_1).
\]
Thus, for every vertex $\by_0 \in V_1 \ / \ V$, 
\begin{equation} \label{eq:semi_j_g1g_1}
\sum_{\by_0 \to \by' \in E_1} J_{1, \by_0  \to \by'}  (\by' - \by_0) = \mathbf{0}.
\end{equation}
For every vertex $\by_0 \in V \cap V_1$, 
\begin{equation} \notag
 \sum_{\by_0 \to \by' \in E_1} J_{1, \by_0  \to \by'}  (\by' - \by_0)
= \sum_{\by_0 \to \by \in E} J_{\by_0  \to \by} (\by - \by_0),
\end{equation}
which is equivalent to that for every vertex $\by_0 \in V \cap V_1$,
\begin{equation} \label{eq:semi_j_g1g_2}
 \sum_{\by_0 \to \by' \in E_1} J_{1, \by_0  \to \by'}  (\by' - \by_0) \in \spn \{ \by - \by_0 \}_{\by_0 \to \by \in E}.
\end{equation}
Further, since $\bJ_1 \in \mJ (G_1)$ we have for every vertex $\by_0 \in V_1$,
\begin{equation} \label{eq:semi_j_g1g_3}
 \sum_{\by \to \by_0 \in E_1} J_{1, \by \to \by_0} 
= \sum_{\by_0 \to \by' \in E_1} J_{1, \by_0 \to \by'}.
\end{equation}

Now consider the set of flux vectors as follows:
\begin{equation} \notag
\widetilde{\mJ} (G_1, G) := \{ \bJ_1 \in \RR^{|E_1|} \ \big| \ \bJ_1 
\text{ satisfies Equations}~\eqref{eq:semi_j_g1g_1}, \eqref{eq:semi_j_g1g_2}
\text{ and }
\eqref{eq:semi_j_g1g_3} 
\}.
\end{equation}
It is clear that $\widetilde{\mJ} (G_1, G)$ is a linear subspace of $\RR^{|E_1|}$, thus $\widetilde{\mJ} (G_1, G)$ is a polyhedral cone.
Hence there exists some matrix $\widetilde{M}$, such that
$\widetilde{\mJ} (G_1, G) = \{\bx \in \RR^{|E_1|}: \widetilde{M} \bx \geq \textbf{0} \}$. 
Here we set a new matrix $M$ as 
\[
M =
\left[\begin{array}{c}
\widetilde{M} \\ \hdashline[2pt/2pt] 
\bI_{E_1}
\end{array}
\right],
\]
where $\bI_{E_1}$ is an identity matrix of size $E_1$.
Then we get
\[
\widetilde{\mJ} (G_1, G) \cap \RR^n_{\geq 0} = \{\bx \in \RR^{|E_1|}: M \bx \geq \textbf{0} \}.
\]
By Minkowski-Weyl theorem, there is a set of vectors $\{ \bv_1, \bv_2, \ldots, \bv_k \}$ such that 
\begin{equation} \notag
\widetilde{\mJ} (G_1, G) \cap \RR^n_{\geq 0} = \{ a_1 \bv_1 + \cdots a_k \bv_k \ | \ a_i \in \RR_{\geq 0}, \bv_i \in \RR^{|E_1|} \}.
\end{equation} 
Note that $\mJ (G_1, G) = \widetilde{\mJ} (G_1, G) \cap \RR^n_{>0}$ is open.
Hence we derive
\[
\mJ (G_1, G) = \{ a_1 \bv_1 + \cdots a_k \bv_k \ | \ a_i \in \RR_{>0}, \bv_i \in \RR^{|E_1|}\},
\]
and we prove \eqref{j_g1_g_generator}.

\medskip

Next, if $\mJ (G_1, G) \neq \emptyset$, consider any flux vector $\bJ_1 \in \mJ (G_1, G) \subset \RR^{|E_1|}_{>0}$. Then there must exist sufficiently small number $\varepsilon > 0$, such that
\begin{equation} \label{j_g1_g_generator_2}
\bJ_1 + b_1 \bv_1 + \cdots + b_k \bv_k \in \RR^{|E_1|}_{>0},
\ \text{ for any }
1 \leq i \leq k \text{ and }
|b_i| \leq \varepsilon.
\end{equation}
Using \eqref{j_g1_g_generator}, we derive that $\bJ_1 + \sum\limits^k_{i=1} b_i \bv_i \in \mJ (G_1, G)$ and every neighbourhood of $\bJ_1$ is in the form of \eqref{j_g1_g_generator_2}. Hence, we get
\begin{equation} \label{j_g1_g_generator_dim}
\dim (\mJ (G_1, G)) =\dim ( \spn \{ \bv_1, \bv_2, \ldots, \bv_k \} ).
\end{equation}
Moreover, for any $\hbJ \in \eJ(G_1)$ we can also find sufficiently small number $\hat{\varepsilon} > 0$, such that
\[
\bJ_1 + \hat{b} \hbJ \in \RR^n_{>0},
\ \text{ for any }
|\hat{b}| \leq \hat{\varepsilon}.
\]
Lemma \ref{lem:j0} shows that $\bJ_1 + \hat{b} \hbJ \in \mJ (G_1, G)$.
Using \eqref{j_g1_g_generator_dim}, we conclude that
\[
\eJ(G_1) \subset \spn \{ \bv_1, \bv_2, \ldots, \bv_k \}.
\]
\end{proof}

\section{Main results}
\label{sec:main_result}

In this section, we present the main result of this paper where we give a lower bound on the \emph{dimension of an $\RR$-disguised toric locus}.

\paragraph{Notation.}
For simplicity, throughout this section, we abuse the notation in part $(a)$ and introduce some notations in part $(b)$ as follows:
\begin{enumerate}
\item[(a)] Given $\bk = (k_{\by_i \rightarrow \by_j})_{\by_i \rightarrow \by_j \in E} \in \mathbb{R}^{E}$, we consider the associated ``mass-action" dynamical system generated by $(G, \bk)$ as \eqref{eq:realization_ode}-\eqref{eq:realization}. Note that $\bk$ may have non-positive components; if so, these systems are not the usual mass-action systems.

\item[(b)] We consider $G = (V, E)$ to be an E-graph.
Let $b$ denote the dimension of the linear subspace $\mD(G)$, and denote an orthonormal basis of $\mD(G)$ by
\[
\{\bB_1, \bB_2, \ldots, \bB_b\}.
\]
Moreover, we consider $G_1 = (V_1, E_1)$ to be a weakly reversible E-graph.
Let $a$ denote the dimension of the subspace $\eJ(G_1)$, and denote an orthonormal basis of $\eJ(G_1)$ by 
\[
\{\bA_1, \bA_2, \ldots, \bA_a \}.
\]
\end{enumerate}



\subsection{An Injective and Continuous Map to the \texorpdfstring{$\RR$}{R}-Disguised Toric Locus}
\label{sec:psi_function}

Recall that given two E-graphs $G$ and $G_1$, $\dK (G, G_1)$ is the set of reaction rates in $G$ for which there exists a complex-balanced realization in $G_1$. Here we introduce the function $\Psi$ (see Definition \ref{def:psi}) to build a connection between $\dK (G, G_1)$ and $\mJ(G_1,G)$.






 



\begin{definition} \label{def:Q}
Consider a weakly reversible E-graph $G_1 = (V_1, E_1)$. For any E-graph $G = (V, E)$, define the map 
\begin{equation} \notag
\psi: \mJ(G_1,G) \rightarrow \mathbb{R}^a 
\end{equation}
such that for $\bJ \in \mJ(G_1,G)$, 
\begin{equation} \notag
\psi(\bJ) = (\bJ \bA_1, \bJ \bA_2, \ldots, \bJ \bA_a).
\end{equation}
Moreover, we define the set $Q$ as
\[
Q := \psi (\mJ(G_1,G)) = \{\psi(\bJ) \ | \ \bJ \in \mJ(G_1,G) \}.
\]
\end{definition}

\begin{lemma}
\label{lem:Q_open}
Consider a weakly reversible E-graph $G_1 = (V_1, E_1)$. For any E-graph $G = (V, E)$, suppose $\mJ(G_1,G) \neq \emptyset$. Then the set 
$Q$
in \eqref{def:Q} is an open set in $\RR^a$.
\end{lemma}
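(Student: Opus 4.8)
The plan is to show that every point of $Q$ is an interior point, by perturbing a chosen preimage flux vector along the $a$ orthonormal directions $\bA_1,\dots,\bA_a$ that span $\eJ(G_1)$ and observing that $\psi$ converts these perturbations into a genuine open box in $\RR^a$.

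First I would fix an arbitrary $\bq_0 \in Q$ and write $\bq_0 = \psi(\bJ_0)$ for some $\bJ_0 \in \mJ(G_1,G)$. The structural input I need is Lemma~\ref{lem:j_g1_g_cone}: $\mJ(G_1,G)$ is the intersection of a linear subspace of $\RR^{|E_1|}$ with the open positive orthant (so it is relatively open in that subspace), and that subspace contains $\eJ(G_1)$, hence contains each $\bA_i$ and every linear combination $\sum_{i=1}^{a} t_i \bA_i \in \eJ(G_1)$. Combining this with Lemma~\ref{lem:j0}(b), exactly as in the final paragraph of the proof of Lemma~\ref{lem:j_g1_g_cone}, I would produce a single $\varepsilon > 0$ for which
\[
\bJ_0 + \sum_{i=1}^{a} t_i \bA_i \in \mJ(G_1,G)
\qquad\text{whenever } \max_{1\le i\le a} |t_i| < \varepsilon ,
\]
the point being that the perturbed vector remains a complex-balanced flux vector on $G_1$ that is flux-equivalent to one $\RR$-realizable on $G$, and for small $|t_i|$ it stays in $\RR^{|E_1|}_{>0}$.

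Next I would evaluate $\psi$ on this box. Since $\{\bA_1,\dots,\bA_a\}$ is orthonormal with respect to the standard inner product, the $j$-th coordinate of $\psi\bigl(\bJ_0 + \sum_i t_i \bA_i\bigr)$ equals $\langle \bJ_0, \bA_j\rangle + t_j$, so
\[
\psi\Bigl(\bJ_0 + \sum_{i=1}^{a} t_i \bA_i\Bigr)
= \bq_0 + (t_1,\dots,t_a).
\]
As $(t_1,\dots,t_a)$ ranges over the open cube $(-\varepsilon,\varepsilon)^{a}$, the right-hand side ranges over an open neighborhood of $\bq_0$ in $\RR^a$, and every such point lies in $Q = \psi(\mJ(G_1,G))$. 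Thus $\bq_0$ is interior to $Q$, and since $\bq_0$ was arbitrary, $Q$ is open.

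I do not anticipate a real obstacle. The only step needing care is the simultaneous perturbation above — that $\bJ_0$ can be moved in all $a$ directions $\bA_i$ at once without leaving $\mJ(G_1,G)$ — and this is immediate from the facts, already recorded in Lemma~\ref{lem:j_g1_g_cone}, that $\eJ(G_1)$ is a linear subspace contained in the ambient linear hull of $\mJ(G_1,G)$ and that $\mJ(G_1,G)$ is relatively open in that hull. Once that is in hand, orthonormality of the $\bA_i$ forces $\psi$ to act as a pure translation along those directions, so openness of the image follows at once.
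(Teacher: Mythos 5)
Your argument is correct and is essentially the paper's own proof: both fix a preimage $\bJ_0$ of an arbitrary point of $Q$, use Lemma~\ref{lem:j_g1_g_cone} (together with Lemma~\ref{lem:j0}) to perturb $\bJ_0$ along the orthonormal directions $\bA_1,\dots,\bA_a$ while staying inside $\mJ(G_1,G)$, and then observe that orthonormality makes $\psi$ act as a translation on these perturbations, so the image contains an open neighborhood of the chosen point. No substantive difference from the paper's route.
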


\begin{proof} 
Consider a complex-balanced flux vector $\bJ \in \mJ(G_1,G) \subset \RR^{|E_1|}_{>0}$.
Recall $\{\bA_i\}^a_{i=1}$ forms an orthonormal basis of the subspace $\eJ(G_1)$. Given a vector $\br = (r_1, r_2, \ldots, r_a)$, we consider the following flux vector:
\[
\bJ_{\br} = \bJ + \sum\limits^a_{i=1} r_i A_i.
\]
Since $\{\bA_i\}^a_{i=1}$ are unit vectors and $\bJ \in \RR^{|E_1|}_{>0}$, there exists a sufficiently small positive number $\epsilon > 0$, such that for any $|\br| \leq \epsilon$, $\bJ_{\br} \in \RR^{|E_1|}_{>0}$.
Using Lemma \ref{lem:j_g1_g_cone}, we get
\begin{equation} \label{eq:jr}
(G, \bJ) \sim (G, \bJ_{\br}) 
\ \text{ and } \ 
\bJ_{\br} \in \mJ(G_1,G).
\end{equation}

From Definition~\ref{def:Q}, for any $\bq^* \in Q$ there exists a flux vector $\bJ^* \in \mJ(G_1,G)$.
From \eqref{eq:jr}, there exists a  positive real number $\epsilon^* > 0$ such that
\[
\bJ^*_{\br} \in \mJ(G_1,G),
\ \text{ for any }
|\br| \leq \epsilon^*.
\]
From $\{\bA_i\}^a_{i=1}$ being an orthonormal basis, for any $|\br| \leq \epsilon^*$
\[
\psi (\bJ^*_{\br}) = \psi (\bJ^*) + \br = \bq^* + \br \in Q.
\]
Therefore, we conclude that the set $Q$ is open.
\end{proof}

\begin{definition} \label{def:psi}
Given a weakly reversible E-graph $G_1 = (V_1, E_1)$ with its stoichiometric subspace $\mS_{G_1}$.
Consider an E-graph $G = (V, E)$ and $\bx_0\in\mathbb{R}^n_{>0}$, define the map 
\begin{equation} \label{eq:psi}
\Psi: \mJ(G_1,G)\times [(\bx_0 + \mS_{G_1} )\cap\mathbb{R}^n_{>0}] \times \mathbb{R}^b \rightarrow \dK(G,G_1) \times Q
\end{equation}
such that for $(\bJ, \bx, \bp) \in \mJ(G_1,G)\times [(\bx_0 + \mS_{G_1} )\cap\mathbb{R}^n_{>0}] \times \mathbb{R}^b$, 
\begin{equation} \notag
\Psi(\bJ,\bx, \bp) := (\bk, \bq),
\end{equation}
where 
\begin{equation} \label{def:psi_k}
(G, \bk) \sim (G_1, \bk_1) \ \text{ with } \ k_{1, \by\rightarrow \by'} = \frac{J_{\by\rightarrow \by'}}{{\bx}^{\by}}.
\end{equation} 
and
\begin{equation} \label{def:psi_kq}
\bp = (\bk \bB_1, \bk \bB_2, \ldots, \bk \bB_b), \ \
\bq = (\bJ \bA_1, \bJ \bA_2, \ldots, \bJ \bA_a).
\end{equation}
\end{definition}

Recall Remark \ref{rmk:mJ_dK}, $\dK (G, G_1)$ is empty if and only if $\mJ(G_1, G)$ is empty. 
If $\mJ(G_1, G) = \dK (G, G_1) = \emptyset$, it is clear that the map $\Psi$ is trivial.
Since we are interested in the case when $\dK (G, G_1) \neq \emptyset$, thus we assume both $\mJ(G_1, G)$ and $\dK (G, G_1)$ are non-empty sets in the rest of the paper.

\begin{lemma}
\label{lem:psi_injective}
The map $\Psi$ in Definition \ref{def:psi}
is well-defined and injective.
\end{lemma}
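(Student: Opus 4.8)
The plan is to establish well-definedness and injectivity separately, treating the two coordinates of $\Psi(\bJ,\bx,\bp) = (\bk,\bq)$ in turn. For well-definedness, the subtle part is the first coordinate $\bk$: given $(\bJ,\bx) \in \mJ(G_1,G)\times[(\bx_0+\mS_{G_1})\cap\RR^n_{>0}]$, one first forms the flux vector $\bJ$ on $G_1$, then passes to the candidate rate vector $\bk_1$ on $G_1$ via $k_{1,\by\to\by'} = J_{\by\to\by'}/\bx^{\by}$. Because $\bJ$ is a complex-balanced flux vector on $G_1$ with ``reference point'' $\bx$, Proposition~\ref{prop:craciun2020efficient} (applied with the identity realization, or directly the lemma relating complex-balanced flux vectors to complex-balanced systems) shows $(G_1,\bk_1)$ is a complex-balanced system, so $\bk_1 \in \mK(G_1)$. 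Since $\bJ \in \mJ(G_1,G)$, the flux system $(G_1,\bJ)$ is $\RR$-realizable on $G$; by Proposition~\ref{prop:craciun2020efficient}(b)--(c) this transfers to dynamical equivalence, giving a (unique, by Lemma~\ref{lem:d0}) $\bk \in \RR^E$ with $(G,\bk) \sim (G_1,\bk_1)$. Hence $(G,\bk)$ is disguised toric on $G_1$, i.e. $\bk \in \dK(G,G_1)$, and $\bk$ is well-defined. The second coordinate $\bq = \psi(\bJ)$ lands in $Q$ by the very definition of $Q$ in Definition~\ref{def:Q}, so the pair $(\bk,\bq)$ indeed lies in $\dK(G,G_1)\times Q$. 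I would remark that the input variable $\bp$ does not actually influence the output — it appears only through the constraint \eqref{def:psi_kq}, which is automatically consistent since $\bk$ is already determined — so strictly the map ignores $\bp$; this is worth a sentence to avoid confusion, and it foreshadows that $\bp$ is a ``spectator'' dimension that will matter for the dimension count but not for injectivity of this particular map. Actually, re-reading \eqref{def:psi_kq}: $\bp$ is an \emph{input}, and the relation $\bp = (\bk\bB_1,\ldots,\bk\bB_b)$ is then a constraint that pins down which $\bk$ (among those dynamically equivalent to $\bk_1$ on $G$) is selected — so the correct reading is that $\bk$ is the unique element of the coset $\bk_1|_G + \mD(G)$ whose $\mD(G)$-component equals $\sum_i p_i \bB_i$. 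Under this reading well-definedness needs Lemma~\ref{lem:d0} together with the fact that $\{\bB_i\}$ is an orthonormal basis of $\mD(G)$: the affine space of rate vectors on $G$ dynamically equivalent to $(G_1,\bk_1)$ is a coset of $\mD(G)$, and specifying the $\mD(G)$-coordinates $\bp$ selects a unique point.

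For injectivity, suppose $\Psi(\bJ,\bx,\bp) = \Psi(\bJ',\bx',\bp') = (\bk,\bq)$. From $\bq = \psi(\bJ) = \psi(\bJ')$ and the definition $\psi(\bJ) = (\bJ\bA_1,\ldots,\bJ\bA_a)$, we get $\bJ - \bJ' \perp \eJ(G_1)$, i.e. $\bJ - \bJ'$ has zero component along the subspace $\eJ(G_1)$. On the other hand, both $\bJ$ and $\bJ'$ are used to realize the \emph{same} $\bk$ on $G$ via \eqref{def:psi_k}; the idea is to show this forces $(G_1,\bJ) \sim (G_1,\bJ')$ as flux systems, whence by Lemma~\ref{lem:j0}(b) (both are complex-balanced flux vectors) $\bJ - \bJ' \in \eJ(G_1)$. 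Combining with the orthogonality just derived gives $\bJ = \bJ'$. To see the flux equivalence: $\bk$ determines $\bk_1$ on $G_1$ only up to $\mD(G_1)$ via dynamical equivalence, but the key is the reference point. Here is the clean route: the relation \eqref{def:psi_k} together with $\bk$ fixed means $\bk_1 = (J_{\by\to\by'}/\bx^{\by})$ and $\bk_1' = (J'_{\by\to\by'}/(\bx')^{\by})$ are both dynamically equivalent to $(G,\bk)$, hence $(G_1,\bk_1) \sim (G_1,\bk_1')$, i.e. $\bk_1 - \bk_1' \in \mD(G_1)$. Then I need to recover $\bx = \bx'$: since $\bx,\bx' \in (\bx_0 + \mS_{G_1})\cap\RR^n_{>0}$ and $(G_1,\bk_1)$ is complex-balanced with steady state $\bx$ while $(G_1,\bk_1')$ is complex-balanced with steady state $\bx'$, and $(G_1,\bk_1)\sim(G_1,\bk_1')$ means they generate the same ODE system on $G_1$... hmm, dynamical equivalence on the \emph{same} graph $G_1$ via Lemma~\ref{lem:d0} means $\bk_1 - \bk_1' \in \mD(G_1)$, and a complex-balanced perturbation within $\mD(G_1)$ need not preserve the steady state in general. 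So instead I would argue directly: $\bk$ is literally the same rate vector, so $(G,\bk)$ has $\bx$ as a complex-balanced steady state (transported from $(G_1,\bk_1)$) \emph{and} $\bx'$ as one (transported from $(G_1,\bk_1')$); but $(G,\bk)$ being disguised toric, its set of complex-balanced steady states — as steady states of a complex-balanced realization — within a fixed stoichiometric class is a point, and by Theorem~\ref{thm:cb}(b) all positive steady states of a complex-balanced system are complex-balanced and there is exactly one per invariant polyhedron; since $\bx,\bx'$ lie in the same coset $\bx_0 + \mS_{G_1}$ and $\mS_{G_1}$ is the stoichiometric subspace of the realization, $\bx = \bx'$. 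With $\bx = \bx'$ in hand, $\bk_1 - \bk_1' \in \mD(G_1)$ upgrades to flux equivalence $(G_1,\bJ)\sim(G_1,\bJ')$ because $J_{\by\to\by'} - J'_{\by\to\by'} = (k_{1,\by\to\by'} - k_{1,\by\to\by'}')\bx^{\by}$ and multiplying the $\mD(G_1)$ relation by the monomial $\bx^{\by}$ at each source is exactly the flux version of Definition~\ref{def:d0} (using that $\mS_{G_1}$-membership of each vertex sum is monomial-invariant). Finally, $\bp = (\bk\bB_1,\ldots,\bk\bB_b) = \bp'$ is immediate once $\bk$ is shown equal, which it is by hypothesis; so $(\bJ,\bx,\bp) = (\bJ',\bx',\bp')$.

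The main obstacle I anticipate is the step recovering $\bx = \bx'$ from the equality of the output rate vector $\bk$: it requires carefully invoking the uniqueness of the complex-balanced steady state within a stoichiometric compatibility class (Theorem~\ref{thm:cb}(b)) for the realization $(G_1, \bk_1)$, and checking that the stoichiometric subspace governing that uniqueness is exactly $\mS_{G_1}$, so that two candidate reference points lying in a common coset of $\mS_{G_1}$ must coincide. A secondary delicate point is the clean passage between the $\mD(G_1)$-equivalence of rate vectors and the flux equivalence of the associated flux vectors at a \emph{shared} reference point $\bx$ — essentially Proposition~\ref{prop:craciun2020efficient} again, but one must be sure the proposition is being applied on the same graph $G_1$ with $\bk' = \bk_1'$ and the single point $\bx$, which is exactly its part~(c). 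Everything else (orthonormality bookkeeping for $\bp$ and $\bq$, openness of $Q$ which is already proved in Lemma~\ref{lem:Q_open}) is routine linear algebra.
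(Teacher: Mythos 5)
Your proposal is correct and follows essentially the same route as the paper: well-definedness by selecting the unique representative of the coset $\bk_1|_G + \mD(G)$ with prescribed $\mD(G)$-coordinates $\bp$, and injectivity by first recovering $\bx = \bx'$ from the uniqueness of the complex-balanced steady state in the invariant polyhedron (Theorem~\ref{thm:cb}(b)), then deducing $\bJ - \bJ' \in \eJ(G_1)$ via Proposition~\ref{prop:craciun2020efficient} and Lemma~\ref{lem:j0}(b), and finally combining this with the orthogonality to $\eJ(G_1)$ forced by $\bq = \bq'$. Your self-correction on the role of $\bp$ lands exactly on the paper's reading, and your expanded justification of the $\bx = \bx'$ step fills in a point the paper states tersely.
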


\begin{proof}
First, we show the map $\Psi$ is well-defined. Consider any  point $(\bJ^*, \bx^*, \bp^*) \in \mJ(G_1,G)\times [(\bx_0 + \mS_{G_1} )\cap\mathbb{R}^n_{>0}] \times \mathbb{R}^b$.
From Proposition \ref{prop:craciun2020efficient}, if we set
$\bk_1 = (k_{1, \by\rightarrow \by'})_{\by\rightarrow \by' \in E_1}$
with
$J^*_{\by \rightarrow \by'} = k_{1, \by\rightarrow \by'} (\bx^*)^{\by}$, then 
\[
\bk_1 \in \mK_{\RR} (G_1,G) \subset \mK(G_1).
\]
Thus there exists $\bk \in \RR^{|E|}$, such that
$(G, \bk) \sim (G_1, \bk_1)$ and $\bk \in \dK(G,G_1)$.
Now we suppose $\bp^* = (p^*_1, p^*_2, \ldots, p^*_b)$ and set the vector $\bk^* \in \RR^{|E|}$ as 
\[
\bk^* = \bk + \sum\limits^{b}_{i=1} (p^*_i - \langle \bk, \bB_i \rangle ) \bB_i.
\]
Since $\{ \bB_i \}^b_{i=1}$ is an orthonormal basis of the subspace $\mD(G)$, we compute that
\begin{equation} \label{eq:k*p*}
\bk^* \bB_j = p^*_j,
\ \text{ for any }
1 \leq j \leq b.
\end{equation}
Moreover, from Lemma \ref{lem:d0} and
$\sum\limits^{b}_{i=1} (p^*_i - \bk \bB_i ) \bB_i \in \mD(G)$, we obtain
\[
(G, \bk^*) \sim (G_1, \bk_1).
\]

Now suppose there exists another $\bk^{**} \in \dK(G,G_1)$, such that
\[(G, \bk^{**}) \sim (G_1, \bk_1)
\ \text{ and } \
\bp^* = (\bk^{**} \bB_1, \bk^{**} \bB_2, \ldots, \bk^{**} \bB_b).
\]
Thus we have $(G, \bk^{**}) \sim (G, \bk^*)$, and this shows 
\[
\bk^{**} - \bk^{*} \in \mD(G).
\]
Together with \eqref{eq:k*p*}, we have
\[
p^*_j = \bk^* \bB_j = \bk^{**} \bB_j,
\ \text{ for any }
1 \leq j \leq b.
\]
Since $\{ \bB_i \}^b_{i=1}$ is an orthonormal basis of $\mD(G)$, we derive \[\bk^{**} = \bk^{*},\] 
and conclude that $\bk^* \in \dK(G,G_1)$ is well-defined.
Furthermore, from \eqref{def:psi_kq} we obtain 
\[
\bq^* = (\bJ^* \bA_1,\bJ^* \bA_2 ,..., \bJ^* \bA_a),
\] which is always well-defined.
Therefore, we have
\[
\Psi (\bJ^*, \bx^*, \bp^*) = (\bk^*, \bq^*)
\]
and $\Psi$ is well-defined.

\medskip

Second, we show the map $\Psi$ is injective. Suppose there exist two elements $(\bJ^*, \bx^*, \bp^*)$ and $(\bJ^{**}, \bx^{**}, \bp^{**})$ of $\mJ(G_1,G)\times [(\bx_0 + \mS_{G_1} )\cap\mathbb{R}^n_{>0}] \times \mathbb{R}^b$,
such that
\[
\Psi (\bJ^*, \bx^*, \bp^*) = \Psi (\bJ^{**}, \bx^{**}, \bp^{**}) = (\bk, \bq) \in \dK(G,G_1)\times Q.
\]
From $(\bJ^*, \bx^*)$ and $(\bJ^{**}, \bx^{**})$, we set
$\bk^* = (k^*_{\by\rightarrow \by'})_{\by\rightarrow \by' \in E_1}$ and $\bk^{**} = (k^{**}_{\by\rightarrow \by'})_{\by\rightarrow \by' \in E_1}$ as
\[
k^*_{\by\rightarrow \by'} = \frac{J^*_{\by\rightarrow \by'}}{{(\bx^*)}^{\by}},
\ \
k^{**}_{\by\rightarrow \by'} = \frac{J^{**}_{\by\rightarrow \by'}}{{(\bx^*)}^{\by}}.
\]
From Proposition \ref{prop:craciun2020efficient} and \eqref{def:psi_k}, we derive that 
\[\bk^*, \bk^{**} \in \mK_{\RR} (G_1,G)
\ \text{ and } \ 
(G, \bk) \sim (G_1, \bk^*) \sim (G_1, \bk^{**}).
\]
The uniqueness of the complex-balanced steady state within each invariant polyhedron implies
\[
\bx^* = \bx^{**}.
\]
Using Proposition \ref{prop:craciun2020efficient} and Lemma \ref{lem:j0}, we have 
\[
(G_1, \bJ^*) \sim (G_1, \bJ^{**})
\ \text{ and } \
\bJ^{**} - \bJ^* \in \eJ(G_1).
\]
Moreover, from \eqref{def:psi_kq} we obtain
\[
(\bk \bB_1, \bk \bB_2, \ldots, \bk \bB_b) = \bp^* =  \bp^{**},
\]
and
\[
\bq = (\bJ^* \bA_1, \bJ^* \bA_2, \ldots, \bJ^* \bA_a) 
= (\bJ^{**} \bA_1, \bJ^{**} \bA_2, \ldots, \bJ^{**} \bA_a).
\]
From $\{\bA_i \}^{a}_{i=1}$ is an orthonormal basis of the subspace $\eJ(G)$, together with $\bJ^{**} - \bJ^* \in \eJ(G_1)$, thus we get that 
\[
\bJ^* = \bJ^{**}.
\]
Therefore, we show $(\bJ^*, \bx^*, \bp^*) = (\bJ^{**}, \bx^{**}, \bp^{**})$ and conclude the injectivity.
\end{proof}

\begin{lemma}
\label{lem:psi_cts} 
The map $\Psi$ in Definition \ref{def:psi} is continuous.
\end{lemma}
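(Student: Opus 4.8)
The plan is to reduce the statement entirely to finite-dimensional linear algebra. Write $\Psi(\bJ,\bx,\bp)=(\bk,\bq)$, where the domain carries the subspace topology from $\RR^{|E_1|}\times\RR^n\times\RR^b$. The second coordinate $\bq=(\langle\bJ,\bA_1\rangle,\ldots,\langle\bJ,\bA_a\rangle)$ is the restriction to $\mJ(G_1,G)$ of a fixed linear map $\RR^{|E_1|}\to\RR^a$, hence continuous in $\bJ$ and therefore in $(\bJ,\bx,\bp)$; so the only real task is the continuity of $(\bJ,\bx,\bp)\mapsto\bk$. The idea is to exhibit $\bk$ in closed form, as a composition of continuous maps, by recognizing that $\bk$ is the unique solution of a linear system whose data depend continuously on $(\bJ,\bx,\bp)$.

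First I would record three elementary facts. (i) On the domain the assignment $(\bJ,\bx)\mapsto\bk_1$ with $k_{1,\by\to\by'}=J_{\by\to\by'}/\bx^{\by}$ is continuous, since each monomial $\bx^{\by}$ is continuous and strictly positive on $\RR^n_{>0}$. (ii) Dynamical equivalence is a linear condition: letting $\Phi_H\colon\RR^{E_H}\to\bigoplus_{\by_0\in V\cup V_1}\RR^n$ be the linear map sending a rate vector to the tuple of net reaction vectors at the vertices (with the empty-sum convention when $\by_0\notin V_H$), Definition~\ref{def:de} says that $(G,\bk)\sim(G_1,\bk_1)$ is equivalent to $\Phi_G(\bk)=\Phi_{G_1}(\bk_1)$, while Definition~\ref{def:d0} says $\ker\Phi_G=\mD(G)$. (iii) Consider the linear map $L\colon\RR^{|E|}\to\big(\bigoplus_{\by_0}\RR^n\big)\times\RR^b$ given by $L(\bw)=\big(\Phi_G(\bw),\langle\bw,\bB_1\rangle,\ldots,\langle\bw,\bB_b\rangle\big)$. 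If $L(\bw)=\mathbf 0$ then $\bw\in\ker\Phi_G=\mD(G)$ and $\langle\bw,\bB_j\rangle=0$ for every $j$; since $\{\bB_1,\ldots,\bB_b\}$ is an orthonormal basis of $\mD(G)$ this forces $\bw=\mathbf 0$, so $L$ is injective and hence admits a linear (therefore continuous) left inverse $L^{+}$, e.g.\ the Moore--Penrose pseudoinverse.

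Then I would assemble these. By the construction of $\bk=\bk^{*}$ in the proof of Lemma~\ref{lem:psi_injective}, the vector $\bk$ returned by $\Psi$ is characterized by $(G,\bk)\sim(G_1,\bk_1)$ together with $\langle\bk,\bB_j\rangle=p_j$ for $1\le j\le b$, i.e.\ $L(\bk)=\big(\Phi_{G_1}(\bk_1),\bp\big)$; applying $L^{+}$ gives the closed form $\bk=L^{+}\big(\Phi_{G_1}(\bk_1),\bp\big)$, valid on the whole domain. The right-hand side is a composition of continuous maps: $(\bJ,\bx)\mapsto\bk_1$ by (i), then the linear maps $\bk_1\mapsto\Phi_{G_1}(\bk_1)$ and $L^{+}$, and the identity on $\bp$. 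Hence $(\bJ,\bx,\bp)\mapsto\bk$ is continuous, and together with the continuity of $(\bJ,\bx,\bp)\mapsto\bq$ this yields the continuity of $\Psi$. I expect the only non-routine point to be fact (iii): the observation that adjoining the $b$ scalar constraints $\langle\bw,\bB_j\rangle=p_j$ to the dynamical-equivalence equations makes the combined linear map injective, which is exactly what upgrades the a priori non-canonical realization $\bk$ (unique only modulo $\mD(G)$) into a genuine continuous selection; the rest is bookkeeping with continuous and linear maps.
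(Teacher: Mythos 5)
Your proposal is correct, and it rests on the same key observation as the paper's proof: the vector $\bk$ returned by $\Psi$ is the \emph{unique} solution of the linear system consisting of the dynamical-equivalence equations (whose homogeneous solution space is $\ker\Phi_G=\mD(G)$) together with the $b$ constraints $\langle\bk,\bB_j\rangle=p_j$, and this system's data depend continuously on $(\bJ,\bx,\bp)$. Where you differ is in how continuity of that unique solution is extracted. The paper argues geometrically: it observes that the solution sets of the two groups of equations are affine subspaces with complementary tangent spaces ($\mD(G)$ and, implicitly, $\mD(G)^{\perp}$ --- the paper's text actually misstates the second tangent space as $\mD(G)$), so they intersect transversally in a single point, and then asserts that a transversal intersection point varies continuously with the parameters. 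You instead package both groups of constraints into one injective linear map $L$ and write $\bk=L^{+}\bigl(\Phi_{G_1}(\bk_1),\bp\bigr)$ with a \emph{fixed} linear left inverse $L^{+}$, exhibiting $\bk$ as an explicit composition of continuous maps. Your route buys something concrete: it replaces the paper's somewhat informal appeal to ``transversal intersections vary continuously'' with a closed formula, and as a byproduct it shows $\bk$ is in fact a real-analytic (indeed, fixed-linear-map composed with rational) function of $(\bJ,\bx,\bp)$, not merely continuous. The one point worth stating explicitly in your write-up is that $(\Phi_{G_1}(\bk_1),\bp)$ lies in the image of $L$ (which is exactly the well-definedness established in Lemma~\ref{lem:psi_injective}), since only on the image of $L$ does $L^{+}$ recover the genuine preimage; you implicitly use this when you say the closed form is ``valid on the whole domain,'' and it deserves one sentence.
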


\begin{proof}
Consider any fixed point $(\bJ, \bx, \bp) \in \mJ(G_1,G)\times [(\bx_0 + \mS_{G_1} )\cap\mathbb{R}^n_{>0}] \times \mathbb{R}^b$, such that
\[
\Psi(\bJ,\bx, \bp) = (\bk, \bq).
\]
From \eqref{def:psi_kq}, we have $\bq = (\bJ \bA_1,\bJ \bA_2 , \ldots, \bJ \bA_a)$ and $\bq$ is a continuous function of $\bJ$.

\smallskip

Now it suffices to show that $\bk$ varies continuously as a function of $(\bJ,\bx,\bq)$.
Note that $\bk$ is defined as
\[
(G, \bk) \sim (G_1, \bk_1) \ \text{ with } \ k_{1, \by\rightarrow \by'} = \frac{J_{\by\rightarrow \by'}}{{\bx}^{\by}}.
\]
From \eqref{eq:DE}, for every vertex $\by_0 \in V \cup V_1$
\begin{equation} \notag
\sum_{\by_0 \to \by \in E} k_{\by_0  \to \by} (\by - \by_0) 
= \sum_{\by_0 \to \by' \in E_1} k_{1, \by_0  \to \by'}  (\by' - \by_0).
\end{equation}
Together with \eqref{def:psi_kq}, we get for every vertex $\by_0 \in V \cup V_1$
\begin{equation} \label{eq:k_ct_1}
\sum_{\by_0 \to \by \in E} k_{\by_0  \to \by} (\by - \by_0) 
= \sum_{\by_0 \to \by' \in E_1} \frac{J_{\by_0 \rightarrow \by'}}{{\bx}^{\by_0}} (\by' - \by_0),
\end{equation}
and
\begin{equation} \label{eq:k_ct_2}
\bp = (\bk \bB_1, \bk \bB_2, \ldots, \bk \bB_b).
\end{equation}

Since $\bJ$, $\bx$, and $\bp$ are considered to be fixed, then we can rewrite \eqref{eq:k_ct_1} as
\begin{equation} \label{eq:k_ct_1_1}
\sum_{\by_0 \to \by \in E} k_{\by_0  \to \by} (\by - \by_0) 
= \text{constant}.
\end{equation}
Hence, the solutions to \eqref{eq:k_ct_1} form a linear subspace. 
Suppose $\bk'$ is another solution to \eqref{eq:k_ct_1}, then 
\[
(G, \bk) \sim (G, \bk').
\]
Using Lemma \ref{lem:d0}, we have $\bk' - \bk \in \mD (G)$. Together with the linearity, the tangent space to \eqref{eq:k_ct_1_1} at $(\bJ, \bx, \bp)$ is $\mD(G)$.
On the other hand, it is straightforward that the solutions to \eqref{eq:k_ct_2} form a linear subspace whose tangent space at $(\bJ, \bx, \bp)$ is tangential to 
\begin{equation} \notag
\spn \{\bB_1, \bB_2, \ldots, \bB_b\} = \mD(G).
\end{equation}

This shows two tangent spaces for~\eqref{eq:k_ct_1} and~\eqref{eq:k_ct_2} are complementary, and thus intersect transversally~\cite{guillemin2010differential}. From Lemma \ref{lem:psi_injective}, we get $\bk$ is the only solution to \eqref{eq:k_ct_1} and \eqref{eq:k_ct_2}. These indicate that the unique intersection point (solution) of two equations must vary continuously with respect to parameters $(\bJ, \bx, \bp)$. 
Therefore, $\bk$ varies continuously as a function of $(\bJ,\bx,\bq)$ and we conclude the lemma.
\end{proof}

\subsection{Lower Bound on the Dimension of the \texorpdfstring{$\RR$}{R}-Disguised Toric Locus}
\label{sec:lower_bound}

In this section, we estimate the dimension of the $\RR$-disguised toric locus. This will aid us to understand the size of the reaction rates of a given E-graph that admit a complex-balanced realization.

\medskip

Recall that a set $S$ is a \textbf{semialgebraic set} if it can be represented as a finite union of sets defined by polynomial equalities and polynomial inequalities. On a dense\footnote{As  usual, a subset $B$ of a topological space $X$ is {\em dense} if the closure of $B$ is equal to $X$.} open subset of the semialgebraic set S, it is locally a \textbf{submanifold}~\cite{lee2010introduction}. One can define the \textbf{dimension} of S to be the largest dimension at points at which it is a submanifold. Moreover, semialgebraic sets are closed under finite unions or intersections, the projection operation, and the polynomial mapping~\cite{bierstone1988semianalytic}.

\begin{lemma}
\label{lem:semi_algebaic}
Consider a weakly reversible E-graph $G_1 = (V_1, E_1)$. For any E-graph $G = (V, E)$, $\dK(G,G_1)$ is a semialgebraic set.
\end{lemma}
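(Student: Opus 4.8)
The plan is to express $\dK(G,G_1)$ as the image under a polynomial (or semialgebraic) map of a semialgebraic set, and then invoke the closure properties of semialgebraic sets (closure under projection and under polynomial mappings). First I would recall the characterization implicit in Proposition~\ref{prop:craciun2020efficient}, Lemma~\ref{lem:d0}, and Lemma~\ref{lem:j0}: a rate vector $\bk \in \RR^{|E|}$ lies in $\dK(G,G_1)$ precisely when there exist a complex-balanced flux vector $\bJ \in \mJ(G_1)$ and a point $\bx \in \RR^n_{>0}$ such that the flux system $(G_1, \bJ)$, after dividing out the monomials $\bx^{\by}$, becomes a mass-action system $(G_1, \bk_1)$ with $k_{1,\by\to\by'} = J_{\by\to\by'}/\bx^{\by}$, and $(G,\bk) \sim (G_1,\bk_1)$. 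The dynamical equivalence relation is, by Definition~\ref{def:de}, a finite system of \emph{linear} equations in the coordinates of $\bk$ and $\bk_1$; the complex-balanced and steady-state conditions defining $\mJ(G_1)$ are finitely many linear equalities together with the strict inequalities $J_{\by\to\by'} > 0$; and the relation $k_{1,\by\to\by'}\cdot \bx^{\by} = J_{\by\to\by'}$ is polynomial once we treat the monomials $\bx^{\by}$ as polynomials in $\bx$. Hence the set
\begin{equation} \notag
\widehat{S} := \{ (\bk, \bk_1, \bJ, \bx) \in \RR^{|E|} \times \RR^{|E_1|} \times \RR^{|E_1|} \times \RR^n_{>0} \ \big| \ \text{the above equalities and inequalities hold} \}
\end{equation}
is a semialgebraic set, being a finite intersection of zero sets of polynomials and solution sets of polynomial (in fact linear, or linear-in-monomials) inequalities.

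Next I would take the projection $\pi(\bk,\bk_1,\bJ,\bx) = \bk$ onto the first factor. By the Tarski--Seidenberg theorem (equivalently, the fact cited in the excerpt that semialgebraic sets are closed under the projection operation), $\pi(\widehat{S})$ is semialgebraic. It then remains to check that $\pi(\widehat{S}) = \dK(G,G_1)$. The inclusion $\pi(\widehat{S}) \subseteq \dK(G,G_1)$ is immediate from Proposition~\ref{prop:craciun2020efficient} (part (c) $\Rightarrow$ (a)): if $(\bk,\bk_1,\bJ,\bx) \in \widehat{S}$, then $\bk_1$ together with the steady state obtained from $\bx$ gives a complex-balanced realization of $(G,\bk)$ on $G_1$, so $\bk \in \dK(G,G_1)$. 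For the reverse inclusion, given $\bk \in \dK(G,G_1)$, there is a complex-balanced $(G_1,\bk_1)$ dynamically equivalent to $(G,\bk)$; choosing a complex-balanced steady state $\bx^* \in \RR^n_{>0}$ of $(G_1,\bk_1)$ (which exists by Theorem~\ref{thm:cb}(b)), the flux vector $\bJ$ with $J_{\by\to\by'} = k_{1,\by\to\by'}(\bx^*)^{\by}$ lies in $\mJ(G_1)$ by Lemma~\cite{craciun2020efficient} (the lemma after Theorem~\ref{thm:homeo}'s block in the excerpt), and the tuple $(\bk,\bk_1,\bJ,\bx^*)$ lies in $\widehat{S}$. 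This establishes the equality, and the lemma follows.

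One subtlety I would be careful about is the appearance of monomials $\bx^{\by}$ with $\by$ an arbitrary real vector: semialgebraic arguments require polynomial (indeed integer-exponent) expressions. This is handled by the standing assumption in the paper that $V, V_1 \subset \mathbb{Z}^n_{\geq 0}$ (used already to guarantee forward-invariance of $\RR^n_{>0}$), so that each $\bx^{\by}$ is an honest monomial with nonnegative integer exponents and all the defining relations of $\widehat{S}$ are genuinely polynomial. A second point worth a sentence is that restricting $\bx$ to the open orthant $\RR^n_{>0}$ keeps us within the semialgebraic category (it is cut out by the strict inequalities $x_i > 0$), and the projection $\pi$ is a polynomial map, so no further justification beyond Tarski--Seidenberg is needed. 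The main obstacle, such as it is, is not technical difficulty but rather assembling the correct finite list of defining polynomial conditions and verifying the set-theoretic equality $\pi(\widehat{S}) = \dK(G,G_1)$ cleanly using the equivalences already proved; once that bookkeeping is done, the semialgebraicity is a direct appeal to the quoted closure properties.
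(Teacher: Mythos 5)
Your argument is correct, but it reaches semialgebraicity by a genuinely different route than the paper. You encode membership of $\bk_1$ in $\mK(G_1)$ \emph{existentially}, via auxiliary variables $(\bJ,\bx)$ witnessing a complex-balanced steady state, and then eliminate all auxiliary variables at once by projecting the set $\widehat{S}$ and invoking Tarski--Seidenberg. The paper instead never introduces $\bx$ or $\bJ$ at this stage: it quotes Theorem~\ref{thm:homeo} to assert that $\mK(G_1)$ is a toric variety and hence semialgebraic, cuts it down to $\mK_{\RR}(G_1,G)$ by the polynomial (span/realizability) conditions at each vertex, and then exhibits $\dK(G,G_1)$ as the image of $\mK_{\RR}(G_1,G)\times\RR^{r-s}$ under an explicit affine-in-$\bk_1$ parametrization of the solution set of the linear system \eqref{eq:semi_k_g1g_2}, using closure under polynomial maps rather than under projection. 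Your approach buys self-containedness (you do not need the binomial-ideal description of the toric locus, only the definition of complex balancing) at the cost of a heavier quantifier-elimination step; the paper's approach yields a more explicit presentation of $\dK(G,G_1)$ as a parametrized image, which is closer in spirit to the map $\Psi$ used later. One caveat on your closing remark: the paper states $V\subset\mathbb{Z}_{\geq 0}^n$ only as a hypothesis for forward-invariance, not as a blanket standing assumption, so you are really adding (or making explicit) a rationality/integrality hypothesis on the vertices --- but the paper's own proof needs the same thing implicitly through Theorem~\ref{thm:homeo}, so this is a fair and in fact welcome clarification rather than a gap. A minor stylistic point: the variable $\bJ$ in your $\widehat{S}$ is redundant, since the complex-balance condition can be written directly as polynomial identities in $(\bk_1,\bx)$; keeping it does no harm.
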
 

\begin{proof}
Note that if $\dK(G, G_1) = \emptyset$, then we immediately conclude the lemma. Now suppose $\dK(G, G_1) \neq \emptyset$.
From Definition \ref{def:de_realizable}, for any $\bk \in \dK (G, G_1)$, there exists $\bk_1 \in \mK (G_1, G)$ such that
\[
(G, \bk) \sim (G_1, \bk_1).
\]
Thus for every vertex $\by_0 \in V_1 \ / \ V$
\begin{equation} \label{eq:semi_k_g1g_1}
\sum_{\by_0 \to \by' \in E_1} k_{1, \by_0  \to \by'}  (\by' - \by_0) = \mathbf{0},
\end{equation}
and for every vertex $\by_0 \in V \cap V_1$
\begin{equation} \label{eq:semi_k_g1g_2}
 \sum_{\by_0 \to \by' \in E_1} k_{1, \by_0  \to \by'}  (\by' - \by_0)
= \sum_{\by_0 \to \by \in E} k_{\by_0  \to \by} (\by - \by_0).
\end{equation}

Recall from Theorem \ref{thm:homeo} that $\mK (G_1)$ is a toric variety, and thus it must be a semialgebraic set.
Note that $\mK (G_1, G) \subseteq \mK (G_1)$ in which each reaction rate vector $\bk_1\in \mK (G_1, G)$ satisfies Equations ~\eqref{eq:semi_k_g1g_1} and~\eqref{eq:semi_k_g1g_2}. 
From~\eqref{eq:semi_k_g1g_1}, $\bk_1$ is defined by polynomial equalities on every vertex $\by_0 \in V_1 \ / \ V$. Then since $\bk$ can be picked arbitrarily in $\RR^E$ in \eqref{eq:semi_k_g1g_2}, Equation~\eqref{eq:semi_k_g1g_2} is equivalent to for every vertex $\by_0 \in V \cap V_1$
\begin{equation} \notag
\sum_{\by_0 \to \by' \in E_1} k_{1, \by_0  \to \by'}  (\by' - \by_0) \in \spn \{ \by - \by_0 \}_{\by_0 \to \by \in E}.
\end{equation}
Thus, $\bk_1$ is also defined by polynomial equalities from \eqref{eq:semi_k_g1g_2} on every vertex $\by_0 \in V$. 
Therefore, we show that $\mK (G_1, G)$ is a semialgebraic set.

Now we express $\bk \in \dK (G, G_1)$ in terms of $\bk_1 \in \mK (G_1, G)$ from \eqref{eq:semi_k_g1g_2}. Without loss of generality we assume that there exists $r$ edges in $G = (V, E)$ from the vertex $\by_0 \in V$ as follows:
\[
\{ \by_0 \to \by_1, \by_0 \to \by_2, \ldots, \by_0 \to \by_r \} \subset E.
\]
Further, we assume the first $s$ reaction vectors $\{ \by_i - \by_0 \}^{s}_{i=1}$ form a basis of $\spn \{ \by_j - \by_0 \}^r_{j=1}$, and there exist $r-s$ vectors
$\bv_1, \bv_2, \ldots, \bv_{r-s}$, such that
\begin{equation}
\ker (\bY) = \spn \{ \bv_1, \bv_2, \ldots, \bv_{r-s} \},
\end{equation}
where $\bY$ is a $n \times r$ matrix whose $i^{th}$ column is the reaction vector $\by_i - \by_0$.

Let $\tilde{\bY}$ be the matrix containing the first $s$ columns of $\bY$, then $\big( \tilde{\bY} \big)^T \tilde{\bY}$ is invertible  and the solution set of reaction rate vectors $\bk \in \dK (G, G_1)$ to \eqref{eq:semi_k_g1g_2} can be written as 
\begin{equation} \label{eq:semi_k_g1g_3}
\Big( \big( \tilde{\bY} \big)^T \tilde{\bY} \Big)^{-1}
\big( \tilde{\bY} \big)^T \big( \sum_{\by_0 \to \by' \in E_1} k_{1, \by_0  \to \by'}  (\by' - \by_0) \big)
+ \sum\limits^{r-s}_{i=1} a_i \bv_i,
\end{equation}
with $a_i \in \RR$ for $1 \leq i \leq r-s$. 
Hence, $\dK (G, G_1)$ can be defined by a polynomial mapping from $\mK (G_1, G) \times \RR^{r-s}$.
Since we show $\mK (G_1, G)$ is a semialgebraic set in $\RR^{|E_1|}$, it is clear that $\mK (G_1, G) \times \RR^{r-s}$ is a semialgebraic set in $\RR^{|E_1| + (r-s)}$.
Therefore we conclude that $\dK (G, G_1)$ is semialgebraic set in $\RR^{|E|}$.
\end{proof}


Now we are ready to prove our main theorem.

\begin{theorem}
\label{thm:dim_kisg}

Let $G_1 = (V_1, E_1)$ be a weakly reversible E-graph with its stoichiometric subspace $\mS_{G_1}$. Consider an E-graph $G = (V, E)$ and $\bx_0\in\mathbb{R}^n_{>0}$, then
\begin{equation} \label{eq:dim_kisg}
\dim(\dK(G,G_1)) \geq \dim (\mJ(G_1,G)) + \dim (\mS_{G_1})  + \dim(\eJ(G_1)) - \dim(\mD(G)),
\end{equation}
where $\dK(G,G_1)$ and $\mJ(G_1,G)$ are defined in Definitions \ref{def:de_realizable} and \ref{def:flux_realizable}; $\mD(G)$ and $\eJ(G_1)$ are defined in Definitions~\ref{def:d0} and \ref{def:j0} respectively.
\end{theorem}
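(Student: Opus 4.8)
The plan is to read \eqref{eq:dim_kisg} off the map $\Psi$ of Definition~\ref{def:psi} by a single dimension count, using that $\Psi$ is a continuous injection (Lemmas~\ref{lem:psi_injective} and~\ref{lem:psi_cts}) together with the semialgebraic dimension theory recalled just before Lemma~\ref{lem:semi_algebaic}. First I would verify that every set in sight is semialgebraic: $\dK(G,G_1)$ by Lemma~\ref{lem:semi_algebaic}, the flux cone $\mJ(G_1,G)$ by Lemma~\ref{lem:j_g1_g_cone}, the slice $(\bx_0+\mS_{G_1})\cap\RR^n_{>0}$ as an open subset of an affine subspace, and $Q$ as an open semialgebraic set by Lemma~\ref{lem:Q_open}. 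Consequently both $\mathrm{Dom}\,\Psi$ and the target $\dK(G,G_1)\times Q$ are semialgebraic, and hence so is the image $\Psi(\mathrm{Dom}\,\Psi)$.

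The engine is invariance of domain in the semialgebraic category: a continuous injective map of semialgebraic sets preserves dimension on its image, so
\[
\dim\bigl(\Psi(\mathrm{Dom}\,\Psi)\bigr)=\dim\bigl(\mathrm{Dom}\,\Psi\bigr).
\]
I would then evaluate both sides. Since the domain is a product, its dimension is the sum of the dimensions of its three factors: $\dim(\mJ(G_1,G))$ from the flux cone, $\dim(\mS_{G_1})$ from the steady-state slice, and $\dim\eJ(G_1)$ from the remaining Euclidean factor, which supplies the free parameters needed to pin down a genuine realization in the construction of $\Psi$. On the target side, $Q$ is open (Lemma~\ref{lem:Q_open}) inside a Euclidean space of dimension $\dim\mD(G)$, so that
\[
\dim\bigl(\dK(G,G_1)\times Q\bigr)=\dim(\dK(G,G_1))+\dim\mD(G).
\]
Because $\Psi(\mathrm{Dom}\,\Psi)\subseteq \dK(G,G_1)\times Q$, combining the two displays yields
\[
\dim(\mJ(G_1,G))+\dim(\mS_{G_1})+\dim\eJ(G_1)\ \le\ \dim(\dK(G,G_1))+\dim\mD(G),
\]
which is exactly the rearrangement of \eqref{eq:dim_kisg}.

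The step I expect to be most delicate is the invariance-of-domain input. Since $\dK(G,G_1)$ is only a semialgebraic set rather than a manifold, I would not invoke the classical topological statement directly; instead I would phrase it through the fact that an injective continuous semialgebraic map cannot decrease dimension, applied on the dense open smooth locus where the dimension of a semialgebraic set is attained as a submanifold. A secondary point to pin down is that $Q$ is genuinely full-dimensional, so that the comparison in the last display loses no dimension; this is precisely what Lemma~\ref{lem:Q_open} provides, since it guarantees that $Q$ contains a Euclidean neighbourhood of each of its points. With these two facts in hand, the product dimension formula and the containment of the image give the stated lower bound on $\dim(\dK(G,G_1))$.
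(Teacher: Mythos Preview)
Your overall strategy—feeding the injective continuous map $\Psi$ into a dimension comparison—is exactly the paper's strategy, but there are two concrete problems in the execution.

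\textbf{The factor dimensions are swapped.} In Definition~\ref{def:psi} the third factor of the domain is $\RR^b$ with $b=\dim(\mD(G))$, and by Definition~\ref{def:Q} the set $Q$ sits inside $\RR^a$ with $a=\dim(\eJ(G_1))$. You assigned these the other way around: you gave the Euclidean domain factor dimension $\dim\eJ(G_1)$ and said $Q$ is open in a space of dimension $\dim\mD(G)$. With the correct identifications the inequality one actually obtains from $\Psi$ is
\[
\dim(\mJ(G_1,G))+\dim(\mS_{G_1})+\dim(\mD(G))\ \le\ \dim(\dK(G,G_1))+\dim(\eJ(G_1)),
\]
which is what the paper's proof and all the worked examples use (the signs in the displayed statement~\eqref{eq:dim_kisg} are evidently a typo; compare the computations in the example following Theorem~\ref{thm:dim_kisg 2}).

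\textbf{The invariance-of-domain step is not justified as stated.} You invoke ``an injective continuous semialgebraic map cannot decrease dimension,'' but $\Psi$ is built from the assignment $k_{1,\by\to\by'}=J_{\by\to\by'}/\bx^{\by}$, and since vertices lie in $\RR^n$ (not necessarily $\mathbb{Z}_{\ge 0}^n$), the map $\bx\mapsto\bx^{\by}$ need not be semialgebraic; so you cannot simply appeal to Tarski--Seidenberg to control the image. Even granting semialgebraicity of $\Psi$, you still need to know that the image of $\Psi$ actually meets the top-dimensional smooth locus of $\dK(G,G_1)\times Q$; otherwise the comparison ``image $\subseteq$ target, hence $\dim(\text{image})\le\dim(\text{target})$'' does not give the sharp bound you want. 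The paper resolves both issues at once: it extends $\Psi$ to a map $\hat\Psi$ with target $\dK(G,G_1)\times\RR^a$ and proves $\hat\Psi$ is \emph{surjective}. Then one picks a smooth point of $\dK(G,G_1)$ of maximal dimension, takes a manifold neighbourhood $B$ there, and pulls it back to an open set $A$ in the (manifold) domain; surjectivity guarantees $A\neq\emptyset$, and now the classical topological invariance of dimension applied to the continuous injection $\hat\Psi|_A:A\to B$ between honest manifolds gives $\dim A\le\dim B$. Your sketch gestures at the smooth locus but supplies no mechanism to ensure the image lands there; the extension to $\hat\Psi$ and its surjectivity is precisely that mechanism.
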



\begin{proof}

As before, we let $b = \dim(\mD(G))$ and let $\{\bB_1, \bB_2, \ldots, \bB_b\}$ be an orthonormal basis of $\mD(G)$.
We also let $a = \dim(\eJ(G_1))$ and let $\{\bA_1, \bA_2, \ldots, \bA_a \}$ be an orthonormal basis of $\eJ(G_1)$.

\medskip

\textbf{Step 1: }
First, we define the set $\hat{\mJ} (G_1,G) \subset \RR^{|E_1|}$ as 
\begin{equation}
\label{def:hat_j_g1_g}
\hat{\mJ} (G_1,G) = \{ \bJ + \sum\limits^a_{i=1} w_i \bA_i \ | \ \bJ \in \mJ (G_1,G), \text{ and } w_i \in \RR \text{ for } 1 \leq i \leq a \}.
\end{equation}
It is clear that $\mJ (G_1,G) \subset \hat{\mJ} (G_1,G)$, so we introduce an 
extension of $\Psi$ from \eqref{eq:psi} as follows:
\begin{equation} \label{eq:hpsi}
\hat{\Psi}: \hat{\mJ} (G_1,G) \times [(\bx_0 + \mS_{G_1} )\cap\mathbb{R}^n_{>0}] \times \RR^b \rightarrow \dK(G,G_1)\times \RR^a,
\end{equation}
such that for $(\hat{\bJ}, \bx, \bp) \in \hat{\mJ} (G_1,G) \times [(\bx_0 + \mS_{G_1} )\cap\mathbb{R}^n_{>0}] \times \mathbb{R}^b$, 
\begin{equation} \notag
\hat{\Psi} (\hat{\bJ},\bx, \bp) 
: = (\hat{\bk}, \hat{\bq}),
\end{equation}
where
\begin{equation} \label{def:hpsi_k}
(G, \hat{\bk}) \sim (G_1, \hat{\bk}_1) \ \text{ with } \ \hat{k}_{1, \by\rightarrow \by'} = \frac{\hat{J}_{\by\rightarrow \by'}}{{\bx}^{\by}}.
\end{equation} 
and
\begin{equation} \label{def:hpsi_kq}
\bp = (\hat{\bk} \bB_1, \hat{\bk} \bB_2, \ldots, \hat{\bk} \bB_b), 
\ \
\hat{\bq} = (\hat{\bJ} \bA_1, \hat{\bJ} \bA_2, \ldots, \hat{\bJ} \bA_a).
\end{equation}

Note that the map $\hat{\Psi}$ defined in \eqref{eq:hpsi}-\eqref{def:hpsi_kq} is analogous to $\Psi$ in Definition \ref{def:psi}.
Thus using similar arguments in Lemma \ref{lem:psi_injective} and Lemma \ref{lem:psi_cts}, we can deduce that the map $\hat{\Psi}$ is well-defined, injective, and continuous. Now we show the map $\hat{\Psi}$ is also surjective. 

Consider any fixed point $(\hbk, \hbq) \in \dK(G,G_1)\times \RR^a$.
Since $\hbk \in \dK (G, G_1)$, there exists $\bk_1 \in \mK (G_1, G)$ such that
\begin{equation} \label{eq:gk_g1k1}
(G, \bk) \sim (G_1, \bk_1) 
\ \text{ with } \ \bk_1 = (k_{1, \by\rightarrow \by'})_{\by\rightarrow \by' \in E_1}.
\end{equation}
Theorem \ref{thm:cb} shows that the complex-balanced system $(G_1, \bk_1)$ has a unique steady state $\bx \in [(\bx_0 + \mS_{G_1} )\cap\mathbb{R}^n_{>0}]$. 
We set the flux vector $\bJ_1$ as 
\[
\bJ_1 = (J_{1, \by\rightarrow \by'})_{\by\rightarrow \by' \in E_1}
\ \text{ with } \ J_{1, \by\rightarrow \by'} = k_{1, \by\rightarrow \by'} {\bx}^{\by}.
\]
By \eqref{def:hpsi_k}, the flux system $(G, \bJ_1)$ gives rise to $(G_1, \bk_1)$.
Now suppose $\hbq = (\hat{q}_1, \hat{q}_2, \ldots, \hat{q}_a)$, we build a new flux vector $\hbJ$ as follows
\[
\hbJ = \bJ_1 + \sum\limits^{a}_{i=1} (\hat{q}_i - \langle \bJ_1, \bA_i \rangle ) \bA_i.
\]
Since $\{ \bA_i \}^a_{i=1}$ is an orthonormal basis of the subspace $\eJ(G_1)$, we get for $1 \leq i \leq a$
\begin{equation} \notag
\hbJ \bA_i = \hat{q}_i.
\end{equation}
Using Lemma \ref{lem:j0} and $\sum\limits^{a}_{i=1} (\hat{q}_i - \langle\bJ_1 \bA_i\rangle ) \bA_i \in \eJ(G_1)$, we obtain
$(G, \hbJ) \sim (G_1, \bJ_1)$. Let $\hbk_1 = (k_{1, \by\rightarrow \by'})_{\by\rightarrow \by' \in E_1}$ with $\hat{k}_{1, \by\rightarrow \by'} = \frac{\hat{J}_{\by\rightarrow \by'}}{{\bx}^{\by}}$, from Proposition \ref{prop:craciun2020efficient} and \eqref{eq:gk_g1k1} we have
\[
(G, \hbk) \sim (G_1, \bk_1) \sim (G, \hbk_1).
\]
Further, we let $\bp = (\hat{\bk} \bB_1, \hat{\bk} \bB_2, \ldots, \hat{\bk} \bB_b)$ and derive
\[
\hat{\Psi} (\hat{\bJ},\bx, \bp) = (\hat{\bk}, \hat{\bq}).
\]
Therefore, we prove the map $\hat{\Psi}$ is surjective. 

\medskip

\textbf{Step 2: }
From Lemma \ref{lem:j_g1_g_cone}, there exists a set of vectors $\{ \bv_1, \bv_2, \ldots, \bv_k \}$, such that
\[
\mJ (G_1, G) = \{ a_1 \bv_1 + \cdots a_k \bv_k \ | \ a_i \in \RR_{>0}, \bv_i \in \RR^n\}.
\]
Moreover, $\eJ(G_1) \subseteq \spn \{ \bv_1, \bv_2, \ldots, \bv_k \}$ and
$\dim (\mJ (G_1, G)) =\dim ( \spn \{ \bv_1, \bv_2, \ldots, \bv_k \} )$.

Following \eqref{def:hat_j_g1_g}, we derive that $\hat{\mJ} (G_1, G)$ can be represented as the positive combination of the following vectors: 
\begin{equation} \label{hj_g1g_basis}
\{ \bv_1, \bv_2, \ldots, \bv_k, \pm \bA_1, \pm \bA_2, \ldots, \pm \bA_a \}. 
\end{equation}
Pick any vector $\hat{\bJ}_1 \in \hat{\mJ} (G_1, G) \subset \RR^n$, 
from \eqref{def:hat_j_g1_g} there exist vector $\bJ_1 \in \mJ (G_1, G)$ and $\br_1 \in \spn \{ \bA_i \}^a_{i=1} = \eJ(G_1)$, such that
\[
\hat{\bJ}_1 = \bJ_1 + \br_1.
\]
From \eqref{j_g1_g_generator_2} in Lemma \ref{lem:j_g1_g_cone}, there must exist sufficiently small number $\varepsilon > 0$, such that
\begin{equation} \notag
\bJ_1 + b_1 \bv_1 + \cdots + b_k \bv_k \in \mJ (G_1, G), 
\ \text{ for any }
|b_i| \leq \varepsilon
 \text{ with } 
1 \leq i \leq k.
\end{equation}
Using \eqref{def:hat_j_g1_g}, we get that for any $\br \in \spn \{ \bA_i \}^a_{i=1}$,
\begin{equation} 
\label{hat_j_g1_g_neighbour}
\hat{\bJ}_1 + \sum\limits^k_{i=1} b_i \bv_i + \br 
= \big( \bJ_1 + \sum\limits^k_{i=1} b_i \bv_i \big) + ( \br_1 + \br)
\in \mJ (G_1, G),
\end{equation}
and every neighbourhood of $\hat{\bJ}_1$ is in the form of \eqref{hat_j_g1_g_neighbour}.
Hence, we derive that
\begin{equation} \notag
\dim (\hat{\mJ} (G_1, G)) =\dim ( \spn \{ \bv_1, \bv_2, \ldots, \bv_k, \bA_1, \bA_2, \ldots, \bA_a \} ).
\end{equation}
Since $\spn \{ \bA_i \}^a_{i=1} = \eJ(G_1) \subseteq \spn \{ \bv_1, \bv_2, \ldots, \bv_k \}$, we have
\begin{equation} \notag
\dim (\hat{\mJ} (G_1, G)) = \dim ( \spn \{ \bv_1, \bv_2, \ldots, \bv_k \} ).
\end{equation}
Further, using $\dim (\mJ (G_1, G)) = \dim ( \spn \{ \bv_1, \bv_2, \ldots, \bv_k \} )$ in Lemma \ref{lem:j_g1_g_cone}, we derive 
\begin{equation} \label{hat_j_g1_g_generator_dim}
\dim (\hat{\mJ} (G_1, G)) = \dim (\mJ (G_1, G)).
\end{equation}

\medskip

\textbf{Step 3: }
From Lemma \ref{lem:semi_algebaic}, we get $\dK(G,G_1)$ is a semialgebraic set. Then $\dK(G, G_1)$ is locally a submanifold on a dense open subset. 
Recall that the dimension of a semialgebraic set is the largest dimension at points of which it is locally a submanifold.
Hence there exists some $\bk \in \dK(G, G_1)$ and a neighbourhood of $\bk$, denoted by $U$, such that 
\[
\bk \in U \subset \dK(G, G_1),
\]
where $U$ is a submanifold with $\dim (U) = \dim (\dK(G, G_1))$. 
Then we pick an open set $V$ of $\RR^a$ and let $B = U \times V$.
It is clear that $B$ is open in $\dK(G,G_1)\times \RR^a$ and
\begin{equation}
\label{eq:B_dim}
\dim (B) = \dim (\dK(G, G_1)) + a.
\end{equation}

Since we show that $\hat{\Psi}$ defined in \eqref{eq:hpsi} is surjective and injective, then we consider the preimage of $B$ under the map $\hat{\Psi}$, denoted by $A = \hat{\Psi}^{-1} (B)$. Since $\hat{\Psi}$ is also continuous, $A$ is a open set in $\hat{\mJ} (G_1,G) \times [(\bx_0 + \mS_{G_1} )\cap\mathbb{R}^n_{>0}] \times \RR^b$. 
Recall that $\hat{\mJ} (G_1, G)$ satisfies \eqref{hj_g1g_basis},
$(\bx_0 + \mS_{G_1} )\cap\mathbb{R}^n_{>0}$ is the intersection of an affine linear subspace and the positive orthant, and $\RR^b$ is a $b$-dimensional Euclidean space, we derive 
\begin{equation} \label{eq:A_dim}
\dim (A) = \dim (\hat{\mJ} (G_1, G)) + \dim (\mS_{G_1}) + b.
\end{equation}

Using the fact that $\hat{\Psi}$ is injective and continuous, the invariance of dimension theorem~\cite{hatcher2005algebraic,munkres2018elements} shows that $\dim (A) \leq \dim (B)$.
Together with \eqref{hat_j_g1_g_generator_dim}, \eqref{eq:B_dim}, and \eqref{eq:A_dim}, we obtain
\begin{equation}
\dim (\mJ (G_1, G)) + \dim (\mS_{G_1}) + b
\leq \dim (\dK(G, G_1)) + a.
\end{equation}
and conclude \eqref{eq:dim_kisg}.
\end{proof}

\begin{theorem}
\label{thm:dim_kisg 2}
Let $G = (V, E)$ be an E-graph. 
Suppose $G_1 = (V_1, E_1)$ is a weakly reversible E-graph
with its stoichiometric subspace $\mS_{G_1}$ and $\dK(G,G_1) \neq \emptyset$. Then 
\[
\dim (\dK(G)) \geq \dim (\mJ(G_1,G)) + \dim (\mS_{G_1}) + \dim(\eJ(G_1)) - \dim(\mD(G)).
\]
\end{theorem}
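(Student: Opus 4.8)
The plan is to obtain this theorem as an essentially immediate consequence of Theorem~\ref{thm:dim_kisg}, the only extra ingredient being the inclusion $\dK(G,G_1)\subseteq\dK(G)$ together with monotonicity of semialgebraic dimension. First I would fix an arbitrary $\bx_0\in\mathbb{R}^n_{>0}$ (the quantity $\bx_0$ does not appear in the inequality, only in the construction of $\hat\Psi$ inside the proof of Theorem~\ref{thm:dim_kisg}) and apply Theorem~\ref{thm:dim_kisg} to the pair $(G,G_1)$, which gives
\[
\dim(\dK(G,G_1)) \geq \dim(\mJ(G_1,G)) + \dim(\mS_{G_1}) + \dim(\eJ(G_1)) - \dim(\mD(G)).
\]
Thus the theorem reduces to the single claim $\dim(\dK(G))\geq\dim(\dK(G,G_1))$.

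For that claim, the key step is to show $\dK(G,G_1)\subseteq\dK(G)$. If $G_1\sqsubseteq G_c$, this is immediate from the definition $\dK(G)=\bigcup_{\tilde G\sqsubseteq G_c}\dK(G,\tilde G)$. For a general weakly reversible E-graph $G_1$, I would invoke the remark following Definition~\ref{def:de_realizable}: by the results of~\cite{craciun2020efficient}, whenever the dynamical system generated by $(G,\bk)$ admits a complex-balanced realization on some weakly reversible E-graph, it also admits one on some $G''\sqsubseteq G_c$. Hence every $\bk\in\dK(G,G_1)$ lies in $\dK(G,G'')$ for a suitable $G''\sqsubseteq G_c$, so $\bk\in\dK(G)$.

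Finally, I would record that $\dK(G)$ is a \emph{finite} union of the sets $\dK(G,\tilde G)$ over $\tilde G\sqsubseteq G_c$ (there are finitely many subgraphs of $G_c$), each of which is semialgebraic by Lemma~\ref{lem:semi_algebaic}; since semialgebraic sets are closed under finite unions, $\dK(G)$ is semialgebraic, and the dimension of a semialgebraic set is monotone under inclusion. Therefore $\dim(\dK(G))\geq\dim(\dK(G,G_1))$, and chaining this with the bound from Theorem~\ref{thm:dim_kisg} concludes the proof. I do not anticipate a genuine obstacle here: the only point needing care is the passage from an arbitrary weakly reversible $G_1$ to a subgraph of $G_c$, and that is precisely what the cited remark (via~\cite{craciun2020efficient}) supplies; the remainder is routine bookkeeping with dimensions of semialgebraic sets.
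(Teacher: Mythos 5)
Your proposal is correct and takes essentially the same route as the paper, whose entire proof is the one-line observation that $\dK(G,G_1)\subseteq\dK(G)$ combined with Theorem~\ref{thm:dim_kisg}. The extra care you take (reducing a general weakly reversible $G_1$ to some $G''\sqsubseteq G_c$ via the remark after Definition~\ref{def:de_realizable}, and justifying monotonicity of dimension by noting $\dK(G)$ is a finite union of semialgebraic sets) only fills in details the paper leaves implicit.
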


\begin{proof}
Since $\dK(G, G_1) \subset \dK(G)$, the proof follows directly from Theorem \ref{thm:dim_kisg}.
\end{proof}

\begin{example}
We consider the $\RR$-disguised toric locus on some pairs of E-graphs in Figure~\ref{fig:graphs_dim_dis}.
Specifically, we show how Theorem~\ref{thm:dim_kisg} leads to the lower bound of the $\RR$-disguised toric locus.

\begin{figure}[H]
\centering
\includegraphics[scale=0.34]{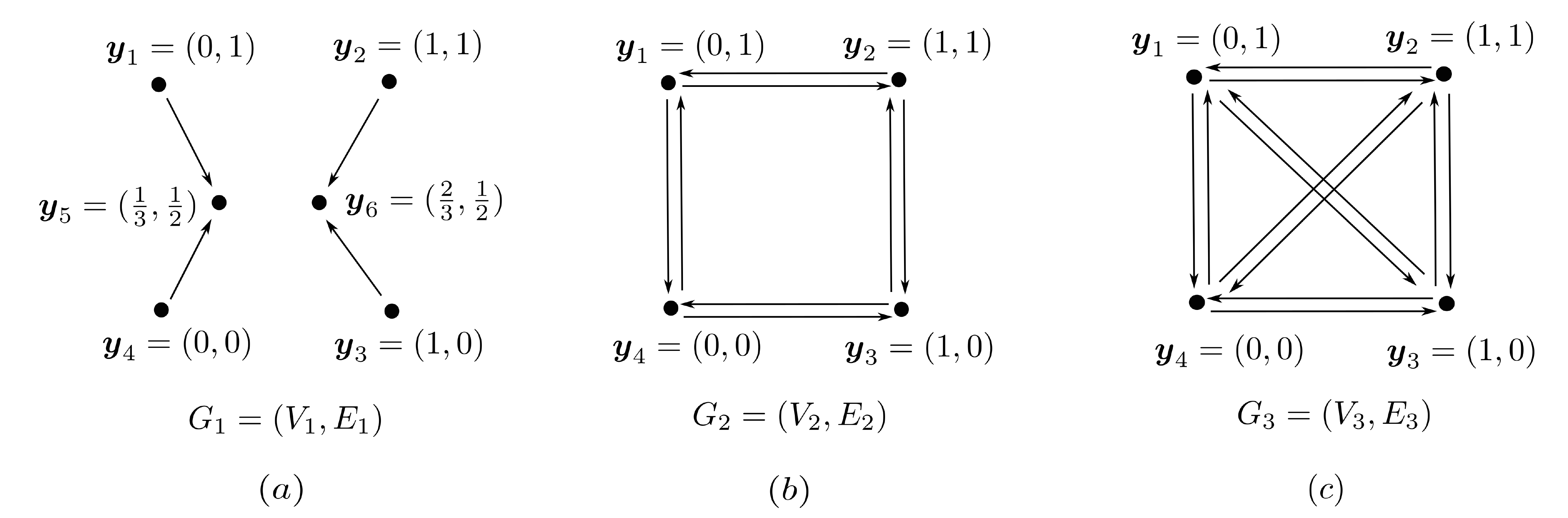}
\caption{Three E-graphs $G_1=(V_1,E_1), G_2 = (V_2,E_2)$ and $G_3=(V_3,E_3)$. Note that $G_1 \subseteq G_3$ and $G_2 \sqsubseteq G_3$.}
\label{fig:graphs_dim_dis}
\end{figure} 

Note that every flux vector in $\mJ(G, G')$ needs to be a complex-balanced vector in $G$, where the total incoming flux and total outgoing flux on each vertex are equal.
\cite{craciun2020structure} shows such flux-balanced conditions give ``the number of vertices minus one'' linear independent constraints on the flux vector.
Further, flux vectors in $\mJ(G, G')$ also need to be $\RR$-realizable in $G'$, which depends on the graphic relation between $G$ and $G'$. 

\begin{enumerate}
\item[(a)] \textbf{For $\dK(G_1, G_2)$.}
We can check that $G_2$ has four source vertices and eight reactions with a two-dimensional stoichiometric subspace. 

We start with computing $\dim (\mJ(G_2, G_1))$.
Consider any flux vector $\bJ \in \mJ(G_2, G_1) \subset \RR^8$.
First, being a complex-balanced vector in $G_2$ gives $4-1=3$ constraints on $\bJ$. 
Second, being $\RR$-realizable in $G_1$ gives $4$ constraints (one constraint for each vertex) on $\bJ$.
Thus we obtain 
\[
\dim (\mJ(G_2, G_1)) \geq 8 - 4 - 3 = 1.
\]
From Example \ref{ex:D0J0G2G3} and checking graph $G_1$, we get 
\[
\mD (G_1) = \eJ (G_2) = \{ \mathbf{0} \}.
\]
Therefore,  by Theorem \ref{thm:dim_kisg} we conclude that
\begin{equation}
\begin{split} \notag
\rm{dim}(\dK(G_1, G_2)) 
& \geq \dim (\mJ(G_2, G_1)) + \dim (\mathcal{S}_{G_2}) + \dim(\mD (G_1)) - \dim(\eJ (G_2)) 
\\& \geq 1 + 2 + 0 - 0 = 3.
\end{split}
\end{equation}

\item[(b)] \textbf{For $\dK(G_1, G_3)$.}
We can check that $G_3$ has four source vertices and twelve reactions, with a two-dimensional stoichiometric subspace. 

We start with computing $\dim (\mJ(G_3, G_1))$.
Consider any flux vector $\bJ \in \mJ(G_3, G_1) \subset \RR^{12}$.
First, being a complex-balanced vector in $G_3$ gives $4-1=3$ constraints on $\bJ$. 
Second, being $\RR$-realizable in $G_1$ gives $4$ constraints (one constraint for each vertex) on $\bJ$.
Thus we obtain 
\[
\dim (\mJ(G_3, G_1)) \geq 12 - 4 - 3 = 5.
\]
From Example \ref{ex:D0J0G2G3} and checking graph $G_1$, we get 
\[
\mD (G_1) = \{ \mathbf{0} \}
\ \text{ and } \
\dim (\eJ(G_3)) =  3.
\]
By Theorem \ref{thm:dim_kisg}, we derive that
\begin{equation}
\begin{split} \notag
\rm{dim}(\dK(G_1, G_3)) 
& \geq \dim (\mJ(G_3, G_1)) + \dim (\mathcal{S}_{G_3}) + \dim(\mD (G_1)) - \dim(\eJ (G_3)) 
\\& \geq 5 + 2 + 0 - 3 = 4.
\end{split}
\end{equation}
Note that $\dK(G_1, G_3) \subset \RR^4$, therefore we can conclude
\[
\dim (\mJ(G_3, G_1)) = 5
\ \text{ and } \
\rm{dim}(\dK(G_1, G_3)) = 4.
\]
Therefore we conclude that $\rm{dim}(\dK(G_1)) = 4$.

\item[(c)] \textbf{For $\dK(G_2, G_3)$.}
We start with computing $\dim (\mJ(G_3, G_2))$.
Consider any flux vector $\bJ \in \mJ(G_3, G_2) \subset \RR^{12}$.
First, being a complex-balanced vector in $G_3$ gives $4-1=3$ constraints on $\bJ$. 
Second, being $\RR$-realizable in $G_2$ gives no constraints on $\bJ$ since every flux vector can be transformed into $G_2$.
Thus we obtain 
\[
\dim (\mJ(G_3, G_2)) \geq 12 - 3 - 0 = 9.
\]
From Example \ref{ex:D0J0G2G3}, we get 
\[
\mD (G_2) = \{ \mathbf{0} \}
\ \text{ and } \
\dim (\eJ(G_3)) =  3.
\]
By Theorem \ref{thm:dim_kisg}, we derive that
\begin{equation}
\begin{split} \notag
\rm{dim}(\dK(G_2, G_3)) 
& \geq \dim (\mJ(G_3, G_2)) + \dim (\mathcal{S}_{G_3}) + \dim(\mD (G_2)) - \dim(\eJ (G_3)) 
\\& \geq 9 + 2 + 0 - 3 = 8.
\end{split}
\end{equation}
Note that $\dK(G_1, G_3) \subset \RR^8$, therefore we can conclude
\[
\dim (\mJ(G_3, G_1)) = 9
\ \text{ and } \
\rm{dim}(\dK(G_1, G_3)) = 8.
\]
Therefore we conclude that $\rm{dim}(\dK(G_2)) = 8$.

\item[(d)] \textbf{For $\dK(G_3, G_3)$.}
We start with computing $\dim (\mJ(G_3, G_3))$.
Consider any flux vector $\bJ \in \mJ(G_3, G_2) \subset \RR^{12}$.
Being a complex-balanced vector in $G_3$ gives $4-1=3$ constraints on $\bJ$ and clearly every flux vector is $\RR$-realizable in $G_3$.
Thus we obtain 
\[
\dim (\mJ(G_3, G_2)) \geq 12 - 3 - 0 = 9.
\]
From Example \ref{ex:D0J0G2G3}, we get 
\[
\dim (\mD (G_3)) = 4
\ \text{ and } \
\dim (\eJ(G_3)) =  3.
\]
By Theorem \ref{thm:dim_kisg}, we derive that
\begin{equation}
\begin{split} \notag
\rm{dim}(\dK(G_3, G_3)) 
& \geq \dim (\mJ(G_3, G_3)) + \dim (\mathcal{S}_{G_3}) + \dim(\mD (G_3)) - \dim(\eJ (G_3)) 
\\& \geq 9 + 2 + 4 - 3 = 12.
\end{split}
\end{equation}
Note that $\dK(G_3, G_3) \subset \RR^{12}$, therefore we can conclude
\[
\dim (\mJ(G_3, G_1)) = 9
\ \text{ and } \
\rm{dim}(\dK(G_1, G_3)) = 12.
\]

Therefore we conclude that $\rm{dim}(\dK(G_3)) = 12$.

\end{enumerate}
\end{example}

\begin{remark}
    In the example above we have chosen, for simplicity, some networks with especially simple geometry (e.g., the source complexes of $G_1$ are the vertices of a square). On the other hand,  the same approach would have worked without change as long as (for example) the source complexes of $G_1$ were the vertices of {\em convex nondegenerate quadrilateral}, and its four reactions pointed towards {\em the interior} of that quadrilateral. In particular, the precise position of the {\em target} vertices  of $G_1$ is irrelevant, as long as they lie in the interior of that quadrilateral.
\end{remark}

\begin{remark}
Note that, given a weakly reversible graph $G$, in general we have that $\dK(G, G)$ is {\em not} the same as its toric locus $\mK(G)$. This may look surprising, because the meaning of $\dK(G, G)$ is ``parameter values on $G$ that can be realized as toric using the graph $G$"; recall though that the same graph may give rise to {\em multiple realizations} of a dynamical system, and this is why $\dK(G, G)$ can be larger than $\mK(G)$. Indeed, we have already seen such an example above: the case of the complete graph $G_3$, for which $\mK(G_3)$ is a variety of codimension one, while $\dK(G_3, G_3)$ is actually the {\em $\RR$-disguised} toric locus of $G_3$, and is a full-dimensional subset of $\RR^{12}$. 
\end{remark}

\section{Discussion}
\label{sec:discussion}

Complex-balanced systems (also known as toric dynamical systems) exhibit a wide range of robust dynamical properties. In particular, their positive steady states are locally asymptotically stable. Further, it is known that for these systems, there exists a unique positive steady in each stoichiometric compatibility class~\cite{horn1972general},  and it is conjectured to be a global attractor~\cite{craciun2015toric}. Here we focus on \emph{$\RR$-disguised toric dynamical systems} - i.e., dynamical systems that have the same polynomial right-hand side as some toric dynamical systems. 

In Theorem~\ref{thm:dim_kisg} we provide a 
lower bound on the dimension of the $\RR$-disguised toric locus. In several examples, we show that reaction networks for which the toric locus has measure zero can have an $\RR$-disguised toric locus that has a positive measure. This allows us to address a {\em significant limitation of the classical theory of complex balanced systems}: the fact that, for networks of positive deficiency, the set of parameters for which we can take advantage of this theory has positive codimension, so it is, in a sense, small. We see here that, in some cases,  {\em this limitation can be overcome if we extend our analysis to the $\RR$-disguised toric systems}.

The approach described here lays the foundation for some future projects. In upcoming work~\cite{disg_2}, we investigate under what conditions the lower bound on the dimension of the $\RR$-disguised toric locus obtained in this paper is actually the {\em exact} value of this dimension. Another potential research direction is to explore further the properties of the continuous and injective map $\Psi$ defined in Equation~\ref{eq:psi}. For example, if  $\Psi$ (or a well-chosen restriction of $\Psi$) can be shown to be a diffeomorphism, this may allow us to identify subsets of the $\RR$-disguised toric locus that are smooth manifolds and realize its dimension. This will help us understand better under what conditions on the network  we can guarantee that its $\RR$-disguised toric locus has a positive measure. Yet another direction we plan to explore in upcoming work~\cite{disg_3} is to related the dimension of the $\RR$-disguised toric locus to the dimension of the (usual) disguised toric locus.

\section*{Acknowledgements} 

This work was supported in part by the National Science Foundation grant DMS-2051568.

\bibliographystyle{unsrt}
\bibliography{Bibliography}

\begin{thebibliography}{10}

\bibitem{CraciunDickensteinShiuSturmfels2009}
Gheorghe Craciun, Alicia Dickenstein, Anne Shiu, and Bernd Sturmfels.
\newblock Toric dynamical systems.
\newblock {\em Journal of Symbolic Computation}, 44(11):1551--1565, 2009.

\bibitem{dickenstein2020algebraic}
A.~Dickenstein.
\newblock Algebraic geometry tools in systems biology.
\newblock {\em Not. Am. Math. Soc}, 67:1706--1715, 2020.

\bibitem{horn1972general}
F.~Horn and R.~Jackson.
\newblock General mass action kinetics.
\newblock {\em Arch. Ration. Mech. Anal.}, 47(2):81--116, 1972.

\bibitem{yu2018mathematical}
P.~Yu and G.~Craciun.
\newblock Mathematical {A}nalysis of {C}hemical {R}eaction {S}ystems.
\newblock {\em Isr. J. Chem.}, 58(6-7):733--741, 2018.

\bibitem{gopalkrishnan2014geometric}
M.~Gopalkrishnan, E.~Miller, and A.~Shiu.
\newblock A geometric approach to the global attractor conjecture.
\newblock {\em SIAM J. Appl. Dyn. Syst.}, 13(2):758--797, 2014.

\bibitem{pantea2012persistence}
C.~Pantea.
\newblock On the persistence and global stability of mass-action systems.
\newblock {\em SIAM J. Math. Anal.}, 44(3):1636--1673, 2012.

\bibitem{craciun2013persistence}
G.~Craciun, F.~Nazarov, and C.~Pantea.
\newblock Persistence and permanence of mass-action and power-law dynamical
  systems.
\newblock {\em SIAM J. Appl. Math.}, 73(1):305--329, 2013.

\bibitem{boros2020permanence}
Bal{\'a}zs Boros and Josef Hofbauer.
\newblock Permanence of weakly reversible mass-action systems with a single
  linkage class.
\newblock {\em SIAM J. Appl. Dyn. Syst.}, 19(1):352--365, 2020.

\bibitem{craciun2015toric}
G.~Craciun.
\newblock Toric differential inclusions and a proof of the global attractor
  conjecture.
\newblock {\em arXiv preprint arXiv:1501.02860}, 2015.

\bibitem{craciun2008identifiability}
G.~Craciun and C.~Pantea.
\newblock Identifiability of chemical reaction networks.
\newblock {\em J. Math. Chem.}, 44(1):244--259, 2008.

\bibitem{2022disguised}
L.~Moncus{\'\i}, G.~Craciun, and M.~Sorea.
\newblock Disguised toric dynamical systems.
\newblock {\em J. Pure and Appl. Alg.}, 226(8):107035, 2022.

\bibitem{craciun2020structure}
G.~Craciun, J.~Jin, and M.~Sorea.
\newblock The structure of the moduli spaces of toric dynamical systems.
\newblock {\em arXiv preprint arXiv:2008.11468}, 2023.

\bibitem{michalek2021invitation}
M.~Micha{\l}ek and B.~Sturmfels.
\newblock {\em Invitation to nonlinear algebra}, volume 211.
\newblock American Mathematical Soc., 2021.

\bibitem{haque2022disguised}
S.~Haque, M.~Satriano, M.~Sorea, and P.~Yu.
\newblock The disguised toric locus and affine equivalence of reaction
  networks.
\newblock {\em arXiv preprint arXiv:2205.06629}, 2022.

\bibitem{craciun2020endotactic}
G.~Craciun and A.~Deshpande.
\newblock Endotactic networks and toric differential inclusions.
\newblock {\em SIAM J. Appl. Dyn. Syst.}, 19(3):1798--1822, 2020.

\bibitem{craciun2019polynomial}
G.~Craciun.
\newblock Polynomial dynamical systems, reaction networks, and toric
  differential inclusions.
\newblock {\em SIAM J. Appl. Algebra Geom.}, 3(1):87--106, 2019.

\bibitem{feinberg1979lectures}
M.~Feinberg.
\newblock Lectures on chemical reaction networks.
\newblock {\em Notes of lectures given at the Mathematics Research Center,
  University of Wisconsin}, page~49, 1979.

\bibitem{sontag2001structure}
E.~Sontag.
\newblock Structure and stability of certain chemical networks and applications
  to the kinetic proofreading model of t-cell receptor signal transduction.
\newblock {\em IEEE Trans. Automat.}, 46(7):1028--1047, 2001.

\bibitem{craciun2020efficient}
G.~Craciun, J.~Jin, and P.~Yu.
\newblock An efficient characterization of complex-balanced, detailed-balanced,
  and weakly reversible systems.
\newblock {\em SIAM J. Appl. Math.}, 80(1):183--205, 2020.

\bibitem{2016david}
D.~Cox, J.~Little, and H.~Schenck.
\newblock Toric varieties.
\newblock {\em Graduate Studies in Mathematics}, Providence, R.I, 2011.

\bibitem{guillemin2010differential}
V.~Guillemin and A.~Pollack.
\newblock {\em Differential topology}, volume 370.
\newblock American Mathematical Soc., 2010.

\bibitem{lee2010introduction}
J.~Lee.
\newblock {\em Introduction to topological manifolds}, volume 202.
\newblock Springer Science \& Business Media, 2010.

\bibitem{bierstone1988semianalytic}
E.~Bierstone and P.~Milman.
\newblock Semianalytic and subanalytic sets.
\newblock {\em Publ. Math{\'e}matiques de l'IH{\'E}S}, 67:5--42, 1988.

\bibitem{hatcher2005algebraic}
A.~Hatcher.
\newblock {\em Algebraic topology}.
\newblock Cambridge University Press, 2005.

\bibitem{munkres2018elements}
J.~Munkres.
\newblock {\em Elements of algebraic topology}.
\newblock CRC press, 2018.

\bibitem{disg_2}
G.~Craciun, A.~Deshpande, and J.~Jin.
\newblock The dimension of the disguised toric locus of a reaction nework.
\newblock {\em In preparation}, 2023.

\bibitem{disg_3}
G.~Craciun, A.~Deshpande, and J.~Jin.
\newblock On the relationship between the disguised toric locus and the
  $\mathbb{R}$-disguised toric locus of a reaction network.
\newblock {\em In preparation}, 2023.

\end{thebibliography}

\end{document}